\newtheorem{theorem}{Theorem}[section]
\newtheorem{lemma}[theorem]{Lemma}
\newtheorem{proposition}[theorem]{Proposition}
\newtheorem{remark}[theorem]{Remark}
\newtheorem{hypothesis}{Hypothesis}
\def\bz{\mathbf{z}}
\def\bs{\mathbf{s}}
\def\EE{\mathbb E}\def\PP{\mathbb P}
\def\RR{\mathbb R}
\def\PP{\mathbb P}
\def\cK{{\mathcal K}}
\def\cD{{\mathcal D}}
\def\cI{{\mathcal I}}\def\cM{{\mathcal M}}\def\cJ{{\mathcal J}}
\def\cX{{\mathcal X}}\def\cY{{\mathcal Y}}
\def\1{\mathbf 1}
\def\<{\left<}\def\>{\right>}
\def\({\left(}\def\){\right)}
\numberwithin{equation}{section}
\begin{document}

\title{Moment asymptotics for super-Brownian motions}

\author{Yaozhong Hu}
\author{Xiong Wang} 
\author{Panqiu Xia}
\author{Jiayu Zheng}

\thanks{Y. Hu is supported by an NSERC Discovery grant and a centennial fund from University of Alberta. J. Zheng is supported by NSFC grant 11901598. X Wang is supported by a research fund from Johns Hopkins University.}

\address{Department of Mathematical and Statistical Sciences, University of Alberta, Edmonton, AB, T6G 2G1, Canada.}
\email{\href{mailto:yaozhong@ualberta.ca}{yaozhong@ualberta.ca}}

\address{Department of Mathematics, Johns Hopkins University, Baltimore, MD 21218, USA.}
\email{\href{mailto:xiong_wang@jhu.edu}{xiong\_wang@jhu.edu}}

\address{Department of Mathematics and Statistics, Auburn University, Auburn, AL 36849, USA.}
\email{\href{mailto:pqxia@auburn.edu}{pqxia@auburn.edu}}

\address{Faculty of Computational Mathematics and Cybernetics, Shenzhen MSU-BIT University, Shenzhen, Guangdong, 518172, China.}
\email{\href{mailto:jyzheng@smbu.edu.cn}{jyzheng@smbu.edu.cn}}

\date{\today}

\begin{abstract}
In this paper, long time and high order moment asymptotics for super-Brownian motions (sBm's) are studied. By using a moment formula for sBm's (e.g. \cite[Theorem 3.1]{arxiv-21-hu-kouritzin-xia-zheng}), precise upper and lower bounds for all positive integer moments and for all time of sBm's for certain initial conditions are achieved. Then, the moment asymptotics as time goes to infinity or as
the moment order goes to infinity follow immediately. Additionally, as an application of the two-sided moment bounds, the tail probability estimates of sBm's are obtained.
\end{abstract}

\keywords{Super-Brownian motion, moment formula, two-sided moment bounds, moment asymptotics, intermittency, tail probability.}

\maketitle

\normalsize

\section{Introduction}
Supper-Brownian motions (sBm's),  also called the Dawson-Watanabe superprocesses, are 
a class of measure-valued Markov processes (c.f. \cite[etc]{lnm-93-dawson,ams-00-etheridge,springer-02-perkins}) which play key role in branching processes. Because of its closed connection to positive solutions to a class of nonlinear elliptic partial differential equations (c.f. \cite{ams-02-dykin,ams-04-dykin}), sBm's have attracted much attention in the past a few decades. In the one dimensional situation, it is well-known that, with an initial condition $u_0$ being a deterministic finite measure on $\RR$, an sBm has a density with respect to the Lebesgue measure almost surely. This density is the unique weak (in the probabilistic sense) solution to the following stochastic partial differential equation (SPDE),
\begin{align}\label{equ_sbm}
\frac{\partial}{\partial t}u_t(x)=\frac{1}{2}\Delta u_t(x)+\sqrt{u_t(x)}\dot{W}(t,x),
\end{align} 
where $\dot{W}$ denotes the space-time white noise on $\RR_+\times \RR$, i.e., 
\[
 \EE[\dot{W}(t,x)]=0\quad \hbox{and}
 \quad
 \EE[\dot{W}(t,x)\dot{W}(s,y)]=\delta(t-s)\delta(x-y)\,.
 \]

In the present paper, we explore the moment asymptotics of the one-dimensional sBm as time $t\uparrow \infty$ or moment order $n\uparrow \infty$, under certain initial conditions. The moment asymptotics and related intermittency properties for the solution of SPDEs have been intensively studied under the global Lipschitz assumption on diffusion coefficients (cf. \cite[etc]{jsp-95-bertini-cancrini,mams-94-carmona-molchanov,aihp-15-chen,ap-15-chen-dalang,arxiv-22-chen-guo-song,arxiv-21-hu-wang}). However, as shown in equation \eqref{equ_sbm}, the diffusion coefficient for sBm's is not Lipschitz at $0$. Thus current results are not applicable to sBm's. Indeed, from \cite[Proposition 4.7]{arxiv-21-hu-kouritzin-xia-zheng}, one can easily deduce that the $n$-th moment of an sBm is of at most polynomial growth in $t$. In comparison with, e.g. the parabolic Anderson model (the square root term $\sqrt{u_t(x)}$ in \eqref{equ_sbm} is replaced by ${u_t(x)}$) to which the $n$-th moment of the solution is of the exponential growth  (c.f. \cite[Theorem 1.1]{aihp-15-chen}), the growth of the sBm is much slower. This yields that the intermittency property does not hold for sBm's.

On the other hand, one may regard an sBm satisfying \eqref{equ_sbm}, as a special case of the following equation   with  $\beta=\frac{1}{2}$,
\begin{align}\label{equ_spde}
\frac{\partial}{\partial t}u_t(x)=\frac{1}{2}\Delta u_t(x)+u_t(x)^{\beta}\dot{W}(t,x),\quad \beta\in[0,1].
\end{align}
It is well-known that if $\beta=1$, equation \eqref{equ_spde} is the parabolic Anderson model, whose solution is the exponential of the solution to the Kardar-Parisi-Zhang equation \cite{rel-86-kardar-parisi-zhang} through the Hopf-Cole's transformation (c.f. \cite{cmp-97-bertini-giacomin,am-13-hairer}). A sequence of results related to the moment asymptotics, such as intermittency, high peaks (Anderson's localization), (macroscopic) multifractality, etc., have been fully studied (c.f. \cite[etc]{aihp-15-chen,ap-13-conus-joseph-khoshnevisan,ptrf-12-conus-khoshnevisan}). Instead, if $\beta=0$, equation \eqref{equ_spde} degenerates to a stochastic heat equation with additive noise. Even if the intermittency property fails in this case, the solution to \eqref{equ_spde} with $\beta=0$ is still microscopically multifractal with high peaks (c.f. \cite{ap-17-khoshnevisan-kim-xiao,cmp-18-khoshnevisan-kim-xiao}). It is natural to conjecture that sBm's are also microscopically multifractal with high peaks and it is also natural to guess
that the corresponding parameters shall be bounded between those for the parabolic Anderson model and those for additive noise case. However,   this seems to be a  difficult task and we will
not address it here in this work. 

In the following, we introduce some hypotheses on initial condition $u_0$ that will be used to present the main result of this paper.

\begin{hypothesis}\label{hyp_1}
$u_0$ is a positive function on $\RR$ that is two-sided bounded by positive constants, namely,
\[
K_1\leq u_0(x)\leq K_2,
\]
for all $x\in \RR$ with $K_2\geq K_1>0$.
\end{hypothesis}

\begin{hypothesis}\label{hyp_2}
$u_0$ is a finite measure on $\RR$ such that for any $x\in \RR$,
\begin{align}\label{def_L}
\lim_{t\uparrow \infty}t^{\gamma}\int_{\RR}p_t(x-z)u_0(dz)=L\in (0,\infty),
\end{align}
for  some $\gamma\in (0,1)$, where $p_t(x)=\frac{1}{\sqrt{2\pi t}}e^{-\frac{x^2}{2t}}$ denotes the heat kernel.
\end{hypothesis}

Before presenting our main results, let us make some remarks on both hypotheses. First, in the typical context of superprocess  (c.f. \cite{springer-02-perkins}), an sBm can be constructed as the scaling limit of a sequence of branching Brownian motions, where the limit is a random variable taking values $\cD(\RR_+;\cM_F(\RR))$ in the Skorokhod space of finite measures on $\RR$. This requires 
that the initial condition is also a finite measure. Nevertheless, under Hypothesis \ref{hyp_1}, the lower bound of $u_0$ excludes such a possibility. In this case, \cite[Theorem 1.4]{ptrf-88-konno-shiga} proves the existence and uniqueness of sBm's starting at an infinite measure with polynomial growth at infinity. We also refer readers to \cite[Section 2]{arxiv-21-li-pu} for a heuristic discussion on this problem. On the other hand, suppose $u_0=\delta_z$ with some $z\in \RR$, where $\delta$ denotes the Dirac delta measure (c.f. \cite{ap-15-chen-dalang} for SPDEs with rough initial conditions). Then, $u_0$ satisfies Hypothesis \ref{hyp_2}: for every $x\in \RR$,
\[
\lim_{t\uparrow \infty} \sqrt{t}\int_{\RR}p_t(x-y)\delta_z(dy)=\lim_{t\uparrow \infty} \sqrt{t}p_t(x-z) = \frac{1}{\sqrt{2\pi}}.
\]

Now we state our first result of this paper. 
\begin{theorem}\label{thm_tsbd}
Let $u=\{u_t(x): (t,x)\in \RR_+\times \RR\}$ be the solution to \eqref{equ_sbm}. 
Then, there are  positive constants  $K_*$ and $K^*$   independent of $n$, $t$ and $x$
so that the following statements hold.
\begin{enumerate}
 \item[(i)]   Under Hypothesis \ref{hyp_1} 
\begin{align}\label{inq_tsbd1}
K_*^n (1+n! t^{\frac{1}{2}(n-1)})\leq \EE (u_t(x)^n)\leq (K^*)^n (1+n! t^{\frac{1}{2}(n-1)}),
\end{align}
for all $(t,x)\in \RR_+\times \RR$ and for all positive integer $n$. 
\item[(ii)] 
 Under  Hypothesis \ref{hyp_2}, 
 \begin{align}
  \EE (u_t(x)^n)\leq (K^*)^n n! t^{\frac{n-1}{2}-\gamma},
\end{align}
for all $(t,x)\in [C_x ,\infty)\times \RR$ and 
\begin{align}\label{inq_tsbd2}
  \EE (u_t(x)^n)\ge K_*^n n! t^{\frac{n-1}{2}-\gamma}  
\end{align}
for all $(t,x)\in [nC_x \vee 1, \infty)\times \RR$,  where $C_x > 0$, depending on $x$
and satisfying 
\[
\frac{1}{2}L\leq t^{\gamma}\int_{\RR}p_{t}(x-z)u_0(dz)\leq 2L
\]
for all $t>C_x $, where $L$ comes from \eqref{def_L}. 
\end{enumerate}
\end{theorem}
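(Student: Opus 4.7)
The approach is to exploit the explicit moment formula established in \cite[Theorem 3.1]{arxiv-21-hu-kouritzin-xia-zheng}, which represents $\EE[u_t(x)^n]$ as a positive sum over binary tree topologies with $n$ leaves of iterated integrals. In such a representation, each internal vertex of the tree contributes a ``fusion'' heat-kernel factor at its branching time $s_i$, there is one outer leaf-factor of the form $(p_\rho\ast u_0)(x)$ per leaf (with $\rho$ the corresponding edge length), and the overall integration runs over the ordered simplex $0<s_1<\cdots<s_{n-1}<t$ of branching times. Since every factor is nonnegative, bounding each ingredient from above gives the upper bounds in the theorem, while restricting the summation to a single well-chosen tree topology and the integration to a subregion of the simplex gives the lower bounds.

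For part (i), Hypothesis \ref{hyp_1} ensures that $(p_\rho\ast u_0)(x)$ lies between $K_1$ and $K_2$ uniformly in $\rho$ and $x$, so the $t$-dependence is entirely encoded in the $n-1$ internal fusion factors times the simplex volume. A Dirichlet/Beta-integral computation on the simplex combined with the asymptotic count $(2n-3)!!\asymp 2^n n!/\sqrt{n}$ of binary tree topologies produces the upper bound $(K^*)^n(1+n!\,t^{(n-1)/2})$, the additive $1$ coming from the $n=1$ term. The matching lower bound is obtained by selecting a convenient tree shape (e.g.\ the caterpillar tree) and restricting each branching time to a thin band of size $\asymp t/n$ around $it/n$, so that every fusion kernel is bounded from below by a constant multiple of $(t/n)^{-1/2}$ and the simplex volume is $\asymp (t/n)^{n-1}$.

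For part (ii), Hypothesis \ref{hyp_2} gives the two-sided estimate $(p_\rho\ast u_0)(x)\in[L/(2\rho^\gamma),\,2L/\rho^\gamma]$ whenever $\rho\geq C_x$. The upper bound is obtained as in (i) but extracting the decay $\rho^{-\gamma}$ from a single leaf whose edge-length exceeds $C_x$ (e.g.\ the root-to-first-branching edge, whose length $\geq t-s_{n-1}\geq C_x$ whenever $t\geq C_x$ after a suitable choice of tree), while controlling the remaining leaves uniformly via the finite total mass of $u_0$; this extra $t^{-\gamma}$ factor propagates directly to the final bound. For the lower bound one needs \emph{every} leaf-edge length to exceed $C_x$ simultaneously; restricting the simplex to uniformly spaced branching times $s_i\asymp it/n$ forces $t\geq nC_x$, which is precisely the threshold appearing in \eqref{inq_tsbd2}.

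The main obstacle is the lower bound. Because the moment formula is a positive sum, any restriction of the integration region trivially yields \emph{some} estimate, but producing one that is sharp in \emph{both} $n$ and $t$ requires choosing the tree topology and the subregion so that the combinatorial and analytical gains align. In part (ii), the simultaneous constraints ``every leaf-edge $\geq C_x$'' and ``every branching gap $\gtrsim t/n$'' are what force the linear-in-$n$ threshold $t\geq nC_x\vee 1$; lowering this threshold would require a finer analysis of the interplay between unbalanced tree shapes and the asymptotic regime for $u_0$, so the theorem as stated is essentially what this direct method delivers.
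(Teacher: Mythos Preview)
Your proposal has two genuine gaps, one for each direction of the bound.

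\textbf{Lower bound.} Restricting to a single tree and a thin-band subregion of the simplex does not recover the factor $n!$ in $K_*^n n!\, t^{(n-1)/2}$. Even granting that the spatial integral over a caterpillar tree yields $\prod_{i=1}^{n-1}(t-s_i)^{-1/2}$ (this is itself not immediate and is proved in the paper by an induction on $n$ using the semigroup property, see Lemma~\ref{lmm_lb}), placing $s_i$ in a band of width $\asymp t/n$ around $it/n$ gives $\prod_i(t-s_i)^{-1/2}\asymp (t/n)^{-(n-1)/2}/\sqrt{(n-1)!}$ and band volume $\asymp (t/n)^{n-1}$, hence a contribution of order $C^n t^{(n-1)/2}\big/(n^{(n-1)/2}\sqrt{(n-1)!})$, which is smaller than the target by a factor growing like $n^{2n}$. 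The paper instead (i) proves the spatial bound $\cY_{t,x}\gtrsim \prod_i(t-s_i)^{-1/2}$ uniformly over the \emph{full} simplex, (ii) integrates over the full simplex to get $\int_{\mathbb{T}_{n-1}^t}\prod_i(t-s_i)^{-1/2}\,d\bs\asymp 2^{n-1}t^{(n-1)/2}/(n-1)!$, and (iii) sums over all $|\cJ_{n,n-1}|=n!(n-1)!/2^{n-1}$ indices, not just one topology. The product of these three ingredients is what produces $n!\,t^{(n-1)/2}$. Your count $(2n-3)!!$ is the number of leaf-labeled topologies, not the number of labeled histories indexing the ordered-simplex moment formula, and is smaller by a super-exponential factor.

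\textbf{Upper bound under Hypothesis~\ref{hyp_2}.} You propose to extract $t^{-\gamma}$ from one leaf and control the remaining leaves ``via the finite total mass of $u_0$''. This fails: for $u_0=\delta_z$ one has $(p_\rho*u_0)(x)=p_\rho(x-z)$, which is unbounded as $\rho\downarrow 0$, so leaves with short edge-length cannot be controlled this way. The paper circumvents this by the same inductive semigroup argument (Lemma~\ref{lmm_ub}): after integrating out each spatial variable, every surviving convolution with $u_0$ has time parameter lying in $[t/2,t]$, so Hypothesis~\ref{hyp_2} applies to \emph{all} initial-condition factors and yields $t^{-\gamma(n-n')}$ for the term with $n'$ internal vertices. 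Relatedly, your explanation of the threshold $t\geq nC_x$ (``every leaf-edge must exceed $C_x$'') is not how it arises: when $n'=n-1$ there is only one leaf attached to $u_0$, and the factor $n$ comes from the accumulated dilation $\sum_i\theta_i^{-1}\leq n$ of the effective time parameter along the induction, see the proof of Lemma~\ref{lmm_lb}.
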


The basic tool in the proof of Theorem \ref{thm_tsbd} is the moment formula for sBm's (see Theorem \ref{thm_mnt}) derived in \cite{arxiv-21-hu-kouritzin-xia-zheng}. Due to formula \eqref{for_mmt0}, the $n$-th moment of an sBm can be represented as the summation of a finite sequence of integrals. Thus it suffices to obtain some sharp bounds for each summand. Fix positive integers $n>n'$ and $(\alpha,\beta,\tau)\in \cJ_{n,n'}$ as in \eqref{for_mmt0}. One may see that the corresponding summand is a space-time integral of heat kernels. Each variable, e.g. $z_i$, appears at most three times in the integrand, where expressions like $p_{t-s_i}(x-z_i)^2$ are counted twice. To apply the semi-group property of heat kernels to calculate the integral, we need to show that the product of three heat kernels in terms of $z_i$ is bounded by that of two heat kernels. In fact, for any $(\alpha,\beta,\tau)\in\cJ_{n,n'}$ with $n \geq 3$ and $n'>0$, there will be expression $p_{t-s_1}(x-z_1)^2$ appearing in the integrand (see \cite[Section 4]{arxiv-21-hu-kouritzin-xia-zheng} for details). Thus, by using the equality 
\[
p(t, x)^2=(4\pi t)^{-\frac{1}{2}}p(t/2,x)
\]
and the semi-group property of heat kernels, one can exactly calculate the integral in $z_1$, which involves a factor of the form $p_{\frac{1}{2}(t-s_1)+s_1}(x-z)$, if $\beta_0=0$; or  $p_{\frac{1}{2}(t-s_1)+(s_1-s_j)}(x-z_j)$ with some $j\in \{2,\dots, n'\}$, if $\beta_0=1$. In the former case, we find that $p_{t-s_2}(x-z_2)^2$ appears in the integrand. Thus, one can further proceed with the integration in $z_2$. On the other hand, assuming $\beta_0 = 1$, it can be proved that either $p_{t-s_2}(x-z_2)^2$, the same as the case previously discussed, or $p_{t-s_2}(x-z_2)p_{s_1-s_2}(z_1-z_2)$ appears in the original integrand. Especially, the existence of $p_{t-s_2}(x-z_2)p_{s_1-s_2}(z_1-z_2)$ implies that $j=2$ and thus after the integration in $z_1$, there exist the factor $p_{\frac{1}{2}(t-s_1)+(s_1-s_2)}(x-z_2)p_{t-s_2}(x-z_2)$, 
 in the remaining integrand. Thanks to the fact that $\frac{1}{2}(t-s_2)\leq \frac{1}{2}(t-s_1)+(s_1-s_2) \leq t-s_2$, we can formulate the next inequality
\begin{align*}
p_{\frac{1}{2}(t-s_2)}(x-z_2)p_{t-s_2}(x-z_2)  \lesssim p_{\frac{1}{2}(t-s_1)+(s_1-s_2)}(x-z_2)p_{t-s_2}(x-z_2)  \lesssim p_{t-s_2}(x-z_2)^2.
\end{align*}
Throughout  the paper,   $A \lesssim B$ (and $A \gtrsim B$, $A \sim B$) means that there are  universal constants $C_1, C_2\in (0, \infty)$  such that $A\le C_1B$ (and $A\ge C_2B$, $C_1B \leq A \leq C_2 B$). 
Notice that $p_{\frac{1}{2}(t-s_2)}(x-z_2)p_{t-s_2}(x-z_2)=(3\pi(t-s_2))^{-\frac{1}{2}}p_{\frac{2}{3}(t-s_2)}(x-z_2)$. The semi-group property of the heat kernel can be applied again when computing two-sided bounds of the integral in $z_2$. Hence, one could expect a desired two-sided bound for $\EE [u_t(x)^n]$ via typical iteration arguments. The detailed proof is given in Section \ref{sec_prfthm1}.

As a consequence of Theorem \ref{thm_tsbd}, we can write the following two propositions of the large time and the high moment asymptotics for sBm's. The proofs are trivial, and thus skipped for simplification.

\begin{proposition}
Let $u=\{u_t(x): (t,x)\in \RR_+\times \RR\}$ be the solution to \eqref{equ_sbm}. Then, for every positive integer $n$ and $x\in \RR$,  under Hypothesis \ref{hyp_1}, 
\begin{align}
\lim_{t\uparrow\infty} \frac{\log\EE (u_t(x)^n)}{\log t}= \frac{1}{2}(n-1);
\end{align}
and under  Hypothesis \ref{hyp_2}, 
\begin{align}
\lim_{t\uparrow \infty} \frac{\log\EE (u_t(x)^n)}{\log t}=\frac{1}{2}(n-1)-\gamma.
\end{align}
\end{proposition}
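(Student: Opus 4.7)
The plan is to read off both limits directly from the two-sided moment bounds in Theorem \ref{thm_tsbd}, using only elementary properties of the logarithm and the sandwich theorem. Since $n$ is held fixed while $t\uparrow\infty$, every prefactor depending only on $n$ becomes an $O(1)$ constant and therefore contributes $0$ after division by $\log t$; what survives is the exponent of $t$.

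For part (i) under Hypothesis \ref{hyp_1}, I would take logarithms of \eqref{inq_tsbd1} to obtain
\[
n\log K_* + \log\!\bigl(1+n!\,t^{(n-1)/2}\bigr)\,\le\,\log \EE(u_t(x)^n)\,\le\, n\log K^* + \log\!\bigl(1+n!\,t^{(n-1)/2}\bigr),
\]
then divide by $\log t$ and send $t\uparrow\infty$. The constant terms $n\log K_*/\log t$ and $n\log K^*/\log t$ vanish. For $n\ge 2$, the quantity $n!\,t^{(n-1)/2}$ dominates the bracket, and $\log\!\bigl(1+n!\,t^{(n-1)/2}\bigr)/\log t\to (n-1)/2$; for $n=1$ the bracket equals the constant $1+n!=2$, and the ratio tends to $0$, which agrees with $(n-1)/2=0$. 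The matching upper and lower limits give the stated asymptotic. For part (ii) under Hypothesis \ref{hyp_2}, the thresholds $C_x$ and $nC_x\vee 1$ appearing in Theorem \ref{thm_tsbd}(ii) are finite numbers depending only on $n$ and $x$, so for $t$ large enough both bounds hold simultaneously, yielding
\[
K_*^n\, n!\, t^{(n-1)/2-\gamma}\,\le\,\EE(u_t(x)^n)\,\le\,(K^*)^n\, n!\, t^{(n-1)/2-\gamma}.
\]
Taking logarithms, dividing by $\log t$, and letting $t\uparrow\infty$ kills every term except the exponent of $t$, leaving the common value $(n-1)/2-\gamma$ as the limit.

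There is no genuine obstacle here: once Theorem \ref{thm_tsbd} is in hand, the logarithmic asymptotics follow from a single application of the squeeze theorem to sharp two-sided bounds, which is precisely why the authors label these proofs trivial. The only minor bookkeeping point is verifying that the restricted range $t\ge nC_x\vee 1$ in \eqref{inq_tsbd2} does not hamper the limit for a fixed moment order $n$, and this is immediate since $nC_x\vee 1<\infty$.
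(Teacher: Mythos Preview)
Your proposal is correct and is exactly the argument the paper intends: the authors themselves say these limits follow trivially from Theorem \ref{thm_tsbd} and omit the proof, and what you have written is precisely the standard squeeze-theorem deduction from the two-sided bounds \eqref{inq_tsbd1} and \eqref{inq_tsbd2}. Your handling of the fixed-$n$ threshold $nC_x\vee 1$ in the Hypothesis \ref{hyp_2} case is also the right observation.
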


\begin{proposition}
Let $u=\{u_t(x): (t,x)\in \RR_+\times \RR\}$ be the solution to \eqref{equ_sbm}. Then, under Hypothesis \ref{hyp_1}, for every $(t,x)\in \RR_+\times\RR$, the following convergence holds,
\begin{align}
\lim_{n\uparrow\infty} \frac{\log\EE (u_t(x)^n)}{n\log n}= 1;
\end{align}
and under  Hypothesis \ref{hyp_2}, every $(t,x)\in [C_x ,\infty)\times\RR$, 
\begin{align}\label{inq_hm2}
\limsup_{n\uparrow\infty} \frac{\log\EE (u_t(x)^n)}{n\log n}\leq 1.
\end{align}
\end{proposition}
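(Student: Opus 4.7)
The plan is to pass to logarithms in the two-sided moment bounds of Theorem \ref{thm_tsbd} and then invoke Stirling's formula $\log n! = n\log n - n + O(\log n)$. Since for any fixed constants $a>0$ and any fixed $t>0$ the quantities $n\log a$ and $(\tfrac{n-1}{2}-\gamma)\log t$ are both $O(n)$, they are annihilated once divided by $n\log n$. So the only term that survives the normalization is $\log n!$, and both assertions will reduce to pure bookkeeping.

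For the first equality, under Hypothesis \ref{hyp_1}, inequality \eqref{inq_tsbd1} yields
\[
n\log K_\ast+\log\bigl(1+n!\,t^{(n-1)/2}\bigr)\leq \log\EE(u_t(x)^n)\leq n\log K^\ast+\log\bigl(1+n!\,t^{(n-1)/2}\bigr).
\]
For fixed $t>0$ the additive $1$ is negligible as $n\uparrow\infty$, so by Stirling both sides equal $n\log n+O(n)$. Dividing through by $n\log n$ and sending $n\uparrow\infty$ sandwiches the normalized log-moment between two quantities tending to $1$, giving the limit.

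For the second assertion, under Hypothesis \ref{hyp_2}, the upper bound in Theorem \ref{thm_tsbd}(ii) is available for every positive integer $n$ on the set $[C_x,\infty)\times\RR$. Taking logarithms and dividing by $n\log n$ produces exactly $\limsup_{n\uparrow\infty}\log\EE(u_t(x)^n)/(n\log n)\leq 1$ by the same Stirling computation. The main point of subtlety, and the reason only a $\limsup$ is claimed, is that the matching lower bound \eqref{inq_tsbd2} holds only on the region $t\geq nC_x\vee 1$; for each fixed $t$ this constraint is violated for all sufficiently large $n$, so Stirling cannot be applied to the lower bound to extract a matching $\liminf\geq 1$. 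Promoting \eqref{inq_hm2} to an equality would require a lower bound valid for fixed $t$ and all large $n$, which is a genuinely stronger statement than Theorem \ref{thm_tsbd}(ii) provides; that gap is the only real obstacle in this otherwise trivial argument.
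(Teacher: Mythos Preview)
Your proof is correct and matches the paper's intended approach: the paper explicitly says ``The proofs are trivial, and thus skipped for simplification,'' and your argument---taking logarithms in the bounds of Theorem~\ref{thm_tsbd}, applying Stirling's formula, and noting that all remaining terms are $O(n)$---is precisely the computation being suppressed. Your explanation of why only a $\limsup$ is obtained under Hypothesis~\ref{hyp_2} also coincides with the paper's own remark following the proposition.
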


Under Hypothesis \ref{hyp_2}, we only get an upper bound for the high moment asymptotics (see \eqref{inq_hm2}). This is because as in Theorem \ref{thm_tsbd}, inequality \eqref{inq_tsbd2} holds only for $t>nC_x $. As $n\uparrow \infty$, $nC_x\uparrow \infty$ as well. Thus it seems not possible to have a lower bound for the high moment asymptotics for fixed $t$ by using Theorem \ref{thm_tsbd}.

Another application of Theorem \ref{thm_tsbd} is to get the following tail estimate of sBm's.
\begin{proposition}\label{prop_tail}
Let $u=\{u_t(x): (t,x)\in \RR_+\times \RR\}$ be the solution to \eqref{equ_sbm}. Fix $(t,x)\in \RR_+\times \RR$. Then, under Hypothesis \ref{hyp_1},
\begin{align}\label{inq_te}
-C_1 t^{-\frac{1}{2}}\leq \liminf_{z\to\infty}\frac{\log\PP(u_t(x)>z)}{z}\leq \limsup_{z\to\infty}\frac{\log\PP(u_t(x)>z)}{z}\leq -C_2 t^{-\frac{1}{2}}.
\end{align}
Instead, assume Hypothesis \ref{hyp_2}, and suppose that $t\geq C_x $ where $C_x $ is the same as in Theorem \ref{thm_tsbd}. Then,
\begin{align}\label{inq_teh2}
\limsup_{z\to\infty}\frac{\log\PP(u_t(x)>z)}{z}\leq -C t^{-\frac{1}{2}}.
\end{align}
Here, $C_1$, $C_2$ and $C$ are positive constants independent of $x$.
\end{proposition}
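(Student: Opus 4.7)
The plan is to reduce all three estimates to the two-sided moment bounds of Theorem \ref{thm_tsbd} by means of two complementary classical tools: Markov's inequality for the upper tail bounds \eqref{inq_te} (right) and \eqref{inq_teh2}, and a Paley--Zygmund-type second-moment inequality for the lower tail bound in \eqref{inq_te} (left). The optimization over the moment order $n$ is what produces the correct scale $t^{-1/2}$ in the exponent.

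\textbf{Upper tail bound.} For any positive integer $n$, Markov's inequality gives $\PP(u_t(x)>z)\leq z^{-n}\EE[u_t(x)^n]$. Under Hypothesis \ref{hyp_1}, Theorem \ref{thm_tsbd}(i) bounds this by $2(K^*)^n n!\, t^{(n-1)/2} z^{-n}$ for $n$ large enough, and Stirling gives
\[
\log \PP(u_t(x)>z)\leq n\log K^* + n\log n - n + \tfrac{n-1}{2}\log t - n\log z + O(\log n).
\]
Differentiating the leading linear-in-$n$ part in $n$ shows the infimum is attained near $n^\star \sim z/(K^*\sqrt{t})$, and substituting gives $\log \PP(u_t(x)>z) \leq -n^\star + O(\log t)$, which is exactly $-C_2 t^{-1/2}z$ to leading order. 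The argument under Hypothesis \ref{hyp_2} is identical, with the extra factor $t^{-\gamma}$ only contributing a $\log t$ correction that vanishes after dividing by $z$ and sending $z\to\infty$, yielding \eqref{inq_teh2}.

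\textbf{Lower tail bound.} For the left inequality in \eqref{inq_te} I would use the Paley--Zygmund-type bound
\[
\PP(u_t(x)>z) \geq \frac{\bigl(\EE[u_t(x)^n]-z^n\bigr)_+^2}{\EE[u_t(x)^{2n}]},
\]
obtained by splitting $\EE[u_t(x)^n]$ at level $z$ and applying Cauchy--Schwarz on the tail piece. I choose $n = \lceil \kappa z/\sqrt{t}\rceil$ with $\kappa$ large enough, depending on $K_*$, so that the two-sided bound \eqref{inq_tsbd1} forces $\EE[u_t(x)^n]\geq 2z^n$; by Stirling this amounts to requiring $K_* n\sqrt{t}/e \gtrsim z$. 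Then the numerator is at least $\tfrac14 K_*^{2n}(n!)^2 t^{n-1}$, the denominator is at most $(K^*)^{2n}(2n)!\, t^{(2n-1)/2}$, and the factorial ratio is controlled by the standard bound $(2n)!\leq 4^n(n!)^2$. Combining these gives
\[
\PP(u_t(x)>z)\geq \tfrac14 t^{-1/2}\bigl(K_*/(2K^*)\bigr)^{2n},
\]
and taking logarithms, dividing by $z$, and using $n\sim \kappa z/\sqrt{t}$ produces the desired lower bound $-C_1 t^{-1/2}$.

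\textbf{Expected obstacle.} The calculations are routine once the two-sided moment bounds are in hand, so the only real care is in verifying that the chosen integer $n=n^\star$ lies in a regime where the asymptotic Stirling expansions are accurate and where the term $K_*^n n!\, t^{(n-1)/2}$ genuinely dominates the ``$1$'' in \eqref{inq_tsbd1}. Specifically, one must check that the elementary constant $1$ appearing in the two-sided moment bound of Theorem \ref{thm_tsbd}(i) is absorbed into the $n!\, t^{(n-1)/2}$ term whenever $n\gtrsim z/\sqrt t$ with $z$ large, which is straightforward but requires writing out a uniform-in-$n$ comparison. Aside from this bookkeeping, both directions reduce to the same optimization in $n$, and the matching constants $C_1,C_2$ are explicit in terms of $K_*,K^*$.
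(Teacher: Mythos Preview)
Your proposal is correct and follows essentially the same approach as the paper. The only cosmetic difference is in the upper bound: the paper packages the moment bounds into the exponential moment $\EE[\exp(\alpha u_t(x))]$ and applies Chernoff's inequality with $\alpha\sim (K^*\sqrt{t})^{-1}$, whereas you apply Markov's inequality directly to the $n$-th moment and optimize over $n$; these are two equivalent ways of performing the same Legendre-type optimization. For the lower bound both you and the paper use the Paley--Zygmund inequality applied to $u_t(x)^n$ together with the two-sided bounds of Theorem~\ref{thm_tsbd} and the estimate $(2n)!\le 4^n(n!)^2$, arriving at the same $C^n t^{-1/2}$ lower bound and the same choice $n\sim cz/\sqrt{t}$.
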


\begin{proposition}\label{thm_ld}
Let $u=\{u_t(x): (t,x)\in \RR_+\times \RR\}$ be the solution to \eqref{equ_sbm}. Fix $x\in \RR$. Then, under Hypothesis \ref{hyp_1} or \ref{hyp_2},
\begin{align}\label{inq_ld}
-C_1 \leq \liminf_{t\to\infty}t^{\frac{1}{2} - \sigma} \log\PP(u_t(x)>t^{\sigma})\leq t^{\frac{1}{2} - \sigma} \limsup_{t\to\infty}\log\PP(u_t(x)>t^{\sigma})\leq -C_2,
\end{align}
for any $\sigma > \frac{1}{2}$ under Hypothesis \ref{hyp_1}; and for any $\sigma \in (\frac{1}{2},\frac{3}{2})$ under Hypothesis \ref{hyp_2}, where $C_1$ and $C_2$ are positive constants independent of $x$.
\end{proposition}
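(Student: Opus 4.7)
The plan is to deduce both the upper and the lower bound on $\PP(u_t(x) > t^{\sigma})$ directly from the two-sided moment bounds of Theorem \ref{thm_tsbd} by optimizing over the moment order $n$. In both directions the optimizer turns out to be $n \asymp t^{\sigma - 1/2}$, and the restriction $\sigma < 3/2$ under Hypothesis \ref{hyp_2} will enter exactly because the lower moment bound there requires $t \ge nC_x$.

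For the upper bound I would apply Markov's inequality. Under either hypothesis, Theorem \ref{thm_tsbd} yields, for $t$ large enough,
\[
\EE[u_t(x)^n]\ \le\ 2(K^*)^n\, n!\, t^{(n-1)/2},
\]
so that
\[
\PP(u_t(x) > t^{\sigma})\ \le\ \EE[u_t(x)^n]\, t^{-n\sigma}\ \le\ 2(K^*)^n\, n!\, t^{-n(\sigma - 1/2) - 1/2}.
\]
Taking logarithms and using Stirling's formula $\log n! = n\log n - n + O(\log n)$, the dominant contribution is $n\log(K^* n) - n - n(\sigma - 1/2)\log t$. Minimizing in $n$ gives the critical value $n = \lfloor t^{\sigma - 1/2}/(eK^*)\rfloor$, at which the main term collapses to $-n$. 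Hence $\log\PP(u_t(x) > t^{\sigma}) \le -C_2 t^{\sigma - 1/2} + O(\log t)$ for some $C_2 > 0$, which yields the upper bound in \eqref{inq_ld}.

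For the lower bound I would use a Paley-Zygmund/Cauchy-Schwarz inequality: for nonnegative $X$ and any $y, n$,
\[
\EE[X^n]\ =\ \EE[X^n \1_{X \le y}] + \EE[X^n \1_{X > y}]\ \le\ y^n + \sqrt{\EE[X^{2n}]\,\PP(X>y)},
\]
which, whenever $\EE[X^n] \ge 2y^n$, yields
\[
\PP(X > y)\ \ge\ \frac{(\EE[X^n])^2}{4\,\EE[X^{2n}]}.
\]
Applied with $X = u_t(x)$, $y = t^{\sigma}$ and $n = \lceil A\, t^{\sigma - 1/2}\rceil$ for a sufficiently large constant $A$ (depending only on $K_*$), the lower moment bound of Theorem \ref{thm_tsbd} guarantees $\EE[u_t(x)^n] \ge K_*^n n!\, t^{(n-1)/2} \ge 2 t^{n\sigma}$ via Stirling. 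Combining with the matching upper bound for $\EE[u_t(x)^{2n}]$ and using $(n!)^2/(2n)! \sim \sqrt{\pi n}\, 4^{-n}$,
\[
\PP(u_t(x)>t^{\sigma})\ \gtrsim\ \left(\frac{K_*}{2K^*}\right)^{2n}\sqrt{n}\, t^{-1/2}.
\]
Taking logarithms, the leading term is $-2n\log(2K^*/K_*) \asymp - t^{\sigma-1/2}$, which produces the lower bound in \eqref{inq_ld}.

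The main (and essentially only) obstacle is compatibility of the choice $n \asymp t^{\sigma-1/2}$ with the domain of validity of the moment bounds. Under Hypothesis \ref{hyp_1} the lower moment bound \eqref{inq_tsbd1} holds for all $t$ and $n$, so there is no restriction and any $\sigma > 1/2$ is admissible. Under Hypothesis \ref{hyp_2}, however, \eqref{inq_tsbd2} requires $t \ge nC_x$; substituting $n \asymp t^{\sigma-1/2}$ yields the condition $t^{3/2 - \sigma} \gtrsim C_x$, which is satisfied for all large $t$ precisely when $\sigma < 3/2$. This explains the range $\sigma \in (1/2, 3/2)$ in the Hypothesis \ref{hyp_2} case. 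All remaining steps are routine Stirling estimates, so no additional new ingredients beyond Theorem \ref{thm_tsbd} should be required.
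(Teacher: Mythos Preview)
Your proposal is correct and follows essentially the same approach as the paper: the upper bound via a Chernoff/Markov argument with the moment upper bound, and the lower bound via a Paley--Zygmund type inequality combined with the two-sided moment bounds, both optimized at $n \asymp t^{\sigma-1/2}$, with the constraint $t \ge nC_x$ under Hypothesis~\ref{hyp_2} producing the restriction $\sigma < 3/2$. The only cosmetic differences are that the paper uses the exponential moment bound \eqref{prob>z} (already derived in the proof of Proposition~\ref{prop_tail}) rather than a direct Markov-plus-Stirling optimization, and it recycles inequality \eqref{inq_plb1} for the lower bound rather than restating the Paley--Zygmund step; your direct derivations are equally valid.
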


\section{Proof of Theorem \ref{thm_tsbd}}\label{sec_prfthm1}
This section is devoted to the proof of Theorem \ref{thm_tsbd}. To this end, several lemmas related to the upper and lower bounds are proved in Sections \ref{ss_ub} and \ref{ss_lb} respectively. Then, we complete the proof of Theorem \ref{thm_tsbd} in Section \ref{ss_cpl}

\subsection{The upper bound}\label{ss_ub}
For    fix $(\alpha,\beta,\tau)\in \cJ_{n,n'}$ (see   in Section \ref{ss_mnt}), let 
\begin{align}\label{def_ci}
\cX_{t,x} \coloneqq &\prod_{i=1}^n\Big(\int_{\RR} p_{t}(x-z)u_0(dz)\Big)^{1-\alpha_i} \int_{\RR^{n'}}d\bz_{n'}\prod_{i=1}^{n'}\Big(\int_{\RR}p_{s_i}(z_i-z)u_0(dz)\Big)^{1-\beta_i} \nonumber\\
&\times \prod_{i=1}^{|\alpha|}p(t-s_{\tau(i)},x-z_{\tau(i)})\prod_{i=|\alpha|+1}^{2n'}p(s_{\iota_{\beta}(i-|\alpha|)}-s_{\tau(i)},z_{\iota_{\beta}(i-|\alpha|)}-z_{\tau(i)}),
\end{align}
for all $(t,x) \in \RR_+\times \RR$. In this subsection, we will prove the next lemma 
for  a sharp upper bound for $\cX_{t,x}$.

\begin{lemma}\label{lmm_ub}
Let $\cX_{t,x}$ be given as in \eqref{def_ci} with some $(\alpha,\beta,\tau)\in \cJ_{n,n'}$ with positive integer $n$ and nonnegative integer $n'<n$. Then, under Hypothesis \ref{hyp_1}, we have
\begin{align}\label{ine_prest1}
\cX_{t,x}\leq (2\pi)^{-\frac{n'}{2}}K_2^{n-n'} \prod_{i=1}^{n'}(t-s_i)^{-\frac{1}{2}}
\end{align}
for all $(t,x)\in \RR_+\times \RR$.  On the other hand, under Hypothesis \ref{hyp_2},
\begin{align}\label{ine_prest2}
\cX_{t,x} \leq (2\pi)^{-\frac{n'}{2}}(2L)^{n-n'}  t^{-\gamma(n-n')} \prod_{i=1}^{n'}(t-s_i)^{-\frac{1}{2}},
\end{align}
for all $(t,x)\in [C_x ,\infty)\times \RR$ where $C_x >1$ is the same as in Theorem \ref{thm_tsbd}.
\end{lemma}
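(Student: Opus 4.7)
The plan is to bound $\cX_{t,x}$ by treating the initial-condition factors and the spatial integral of heat kernels separately, iteratively integrating out $z_1,\dots,z_{n'}$ using the semigroup property of the heat kernel together with the identity $p_t(y)^2=(4\pi t)^{-1/2}p_{t/2}(y)$ emphasized in the discussion after Theorem \ref{thm_tsbd}. Under Hypothesis \ref{hyp_1} the bound $\int_\RR p_r(y-z)u_0(dz)\leq K_2$ holds uniformly in $r>0$ and $y\in\RR$, because $u_0\leq K_2$ and the heat kernel is a probability density. The second product in \eqref{def_ci} contains exactly $2n'$ kernel factors matched by $\tau$, which forces $|\alpha|+|\beta|=2n'$ for every $(\alpha,\beta,\tau)\in\cJ_{n,n'}$, and consequently the product of all initial-condition factors is bounded by $K_2^{(n-|\alpha|)+(n'-|\beta|)}=K_2^{n-n'}$. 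It then remains to bound the spatial integral
\[
\cI\coloneqq\int_{\RR^{n'}}d\bz_{n'}\prod_{i=1}^{|\alpha|}p_{t-s_{\tau(i)}}(x-z_{\tau(i)})\prod_{i=|\alpha|+1}^{2n'}p_{s_{\iota_{\beta}(i-|\alpha|)}-s_{\tau(i)}}(z_{\iota_{\beta}(i-|\alpha|)}-z_{\tau(i)})
\]
by $(2\pi)^{-n'/2}\prod_{i=1}^{n'}(t-s_i)^{-1/2}$, which, combined with the initial-condition bound, yields \eqref{ine_prest1}.

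I would prove the estimate on $\cI$ by induction on $n'$, peeling off one variable per step. For the inductive step, the structural analysis below Theorem \ref{thm_tsbd} guarantees that when $n'\geq 1$ some variable $z_{i_0}$ enters the integrand either through a squared factor $p_{t-s_{i_0}}(x-z_{i_0})^2$ or through a pair $p_{t-s_{i_0}}(x-z_{i_0})p_{s_{i_0'}-s_{i_0}}(z_{i_0'}-z_{i_0})$ for some partner index $i_0'<i_0$. In the first case the identity converts $p^2$ into $(4\pi(t-s_{i_0}))^{-1/2}p_{(t-s_{i_0})/2}$, and one application of Chapman-Kolmogorov absorbs $z_{i_0}$ against the partner kernel, producing $(2\pi(t-s_{i_0}))^{-1/2}$ (up to a constant $\leq 1$) and leaving an integrand of the same form with $n'$ replaced by $n'-1$. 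In the second case, first integrating $z_{i_0'}$ via the squared-kernel reduction leaves a remaining factor $p_{\frac{1}{2}(t-s_{i_0'})+(s_{i_0'}-s_{i_0})}(x-z_{i_0})p_{t-s_{i_0}}(x-z_{i_0})$, which by the pointwise comparison highlighted after Theorem \ref{thm_tsbd} is dominated by a constant times $p_{t-s_{i_0}}(x-z_{i_0})^2$, reducing to the first case.

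For part (ii) the same iteration applies, but the initial-condition factors must be merged with a neighbouring heat kernel \emph{before} Hypothesis \ref{hyp_2} is invoked, since that hypothesis requires the time argument to be at least $C_x$. Concretely, Chapman-Kolmogorov combines each factor $\int p_{s_i}(z_i-z)u_0(dz)$ (with $\beta_i=0$) with a kernel in $z_i$ to yield an expression of the form $\int p_r(x-z)u_0(dz)$ for some cumulative time $r\geq t\geq C_x$, which is then bounded by $2Lt^{-\gamma}$ via \eqref{def_L}. The $n-|\alpha|$ direct factors $\int p_t(x-z)u_0(dz)$ are likewise bounded by $2Lt^{-\gamma}$. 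Collecting all $n-n'$ such bounds produces the prefactor $(2L)^{n-n'}t^{-\gamma(n-n')}$, while the heat-kernel integration from part (i) supplies the remaining $(2\pi)^{-n'/2}\prod_{i=1}^{n'}(t-s_i)^{-1/2}$. The principal obstacle is the combinatorial bookkeeping: at every step one must identify the correct variable $z_{i_0}$ whose integration collapses cleanly, and under Hypothesis \ref{hyp_2} one must also route the initial-condition factors through partner kernels so that their effective time argument reaches $t$. This rests on the fine structure of $\cJ_{n,n'}$ recalled in Section \ref{ss_mnt}.
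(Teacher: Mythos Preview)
Your overall strategy matches the paper's: peel off one spatial variable at a time using $p_t(\cdot)^2=(4\pi t)^{-1/2}p_{t/2}(\cdot)$ together with the semigroup property, and induct. For part~(i) your separation of the $K_2^{n-n'}$ initial-condition prefactor is a harmless reorganization of what the paper does. Two points, however, need correction.

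First, your case dichotomy is misstated. The variable to integrate first is always $z_1$, and by condition~(ii) in the definition of $\cK_{n,n'}^{\alpha,\beta}$ both $\tau$-preimages of $1$ lie in $\{1,\dots,|\alpha|\}$; hence the squared factor $p_{t-s_1}(x-z_1)^2$ is \emph{always} present in the integrand. The two cases are therefore not ``squared versus pair'' for some $z_{i_0}$, but rather $\beta_1=0$ (the square together with the initial-condition factor) versus $\beta_1=1$ (the square together with an additional outgoing kernel $p_{s_1-s_j}(z_1-z_j)$). What you describe as the ``second case'' is really the paper's Case~2 viewed from $z_j$ instead of $z_1$.

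Second, and this is the genuine gap for part~(ii): your claim that the cumulative time satisfies $r\geq t$ is false. After the squared-kernel reduction and convolution with $p_{s_1}(\cdot - z)u_0(dz)$ one obtains $\int_\RR p_{r}(x-z)u_0(dz)$ with $r=\tfrac12(t-s_1)+s_1=\tfrac12(t+s_1)\leq t$, not $r\geq t$. Since Hypothesis~\ref{hyp_2} only controls $\int_\RR p_r(x-z)u_0(dz)$ for $r\geq C_x$, you cannot invoke it directly at time $r$. The paper's remedy is the pointwise bound $p_{(t+s_1)/2}(x-z)\leq\sqrt{2}\,p_t(x-z)$ (valid because $\tfrac12 t\leq\tfrac12(t+s_1)\leq t$), which brings the time back to exactly $t$ before applying $\int_\RR p_t(x-z)u_0(dz)\leq 2Lt^{-\gamma}$; the extra $\sqrt{2}$ is what turns the raw $(4\pi)^{-1/2}$ into the $(2\pi)^{-1/2}$ appearing in \eqref{ine_prest1}--\eqref{ine_prest2}. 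You already invoke this same pointwise comparison in your treatment of Case~2, so the fix is simply to apply it here as well rather than asserting $r\geq t$.
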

\begin{proof}
Let $n$ be any positive integer, and let $n'=0$. Then, $(\alpha,\beta,\tau)=(\mathbf{0}_n,\partial,\partial)$, and thus 
\[
\cX_{t,x}=\Big(\int_{\RR}p_t(x-z)u_0(dz)\Big)^n.
\]
Thus, inequalities \eqref{ine_prest1} and \eqref{ine_prest2} are trivially true under Hypotheses \ref{hyp_1} and \ref{hyp_2}, respectively. Particularly, they hold for $n=1$ and all $n'\in\{0, \dots,n-1\}=\{0\}$. Additionally, if $n=2$ and $n'=1$, we have
\begin{align*}
\cX_{t,x}=&\int_{\RR}d\bz_1\Big(\int_{\RR}p_{s_1}(z_1-z)u_0(dz)\Big)p_{t-s_1}(x-z_1)=\int_{\RR}p_{t}(x-z)u_0(dz).
\end{align*}
For the same reason, we can easily verify this lemma in such a situation ($n=2$ and $n'=1$). This allows us to prove this lemma by mathematical induction in $n$.

Let $n> 2$ and let $n'\in \{1,\dots, n-1\}$.  By definition of $\tau$, for any $\{i_1<i_2\}\subset \{1,\dots,2n'\}$ such that $\tau(i_1)=\tau(i_2)=1$, we know that $i_2\leq |\alpha|$. Therefore, $p(t-s_1, x-z_1)^2$ appears in the integrand of $\cX_{t,x}$. 

{\bf Case 1.} Suppose $\beta_1=0$. Recall that $\beta_{n'}=0$. It follows that $|\beta|\leq n'-2$. On the other hand, we know that $|\alpha|+|\beta|=2n'$ and $|\alpha|\leq n$. Thus, $n'\leq n-2$, and we can write, 
\begin{align*}
\cX_{t,x}=\cX_{t,x}^0\cX_{t,x}^1,
\end{align*}
where
\begin{align*}
\cX_{t,x}^0 \coloneqq \int_{\RR^2}p(t-s_1, x-z_1)^2p_{s_1}(z_1-z)u_0(dz)dz_1,
\end{align*}
and
\begin{align*}
\cX_{t,x}^1 \coloneqq & \prod_{i=1}^n\Big(\int_{\RR} p_{t}(x-z)u_0(dz)\Big)^{1-\alpha_i} \int_{\RR^{n'-1}}dz_2\cdots dz_{n'}\prod_{i=2}^{n'}\Big(\int_{\RR}p_{s_i}(z_i-z)u_0(dz)\Big)^{1-\beta_i}\nonumber\\
&\times \prod_{\substack{1\leq i\leq |\alpha|\\ i\notin \{i_1,i_2\}}}p(t-s_{\tau(i)},x-z_{\tau(i)})\prod_{i=|\alpha|+1}^{2n'}p(s_{\iota_{\beta}(i-|\alpha|)}-s_{\tau(i)},z_{\iota_{\beta}(i-|\alpha|)}-z_{\tau(i)}).
\end{align*}
Note that  
\[
p(t-s_1, x-z_1)^2=(4\pi (t-s_1))^{-\frac{1}{2}}p\Big(\frac{1}{2}(t-s_1),x-z_1\Big).
\]
It follows that
\begin{align*}
\cX_{t,x}^0=&(4\pi (t-s_1))^{-\frac{1}{2}}\int_{\RR^2}p\Big(\frac{1}{2}(t-s_1),x-z_1\Big)p_{s_1}(z_1-z)u_0(dz)dz_1\\
=&(4\pi (t-s_1))^{-\frac{1}{2}}\int_{\RR}p\Big(\frac{1}{2}(t-s_1)+s_1,x-z\Big)u_0(dz).
\end{align*}
Additionally, using the fact that $ \frac{1}{2}(t-s_1)+s_1\leq t-s_1+s_1 = t$, we can show that
\begin{align*}
p\Big(\frac{1}{2}(t-s_1)+s_1,x-z\Big)\leq \sqrt{2}p_t(x-z).
\end{align*}
Therefore, under Hypothesis \ref{hyp_1},
\begin{align}\label{ine_prest11}
\cX_{t,x}^0 \leq \frac{K}{\sqrt{2\pi}}(t-s_1)^{-\frac{1}{2}},
\end{align}
for all $(t,x)\in \RR_+\times \RR$; and under Hypothesis \ref{hyp_2},
\begin{align}\label{ine_prest21}
 \cX_{t,x}^0 \leq \frac{2L}{\sqrt{2\pi}}t^{-\gamma}(t-s_1)^{-\frac{1}{2}},
\end{align}
for all $(t,x)\in [C_x ,\infty)\times \RR$.

Let $\alpha' \coloneqq (\alpha_1,\dots,\alpha_{i_1-1},\alpha_{i_1+1},\dots \alpha_{i_2-1},\alpha_{i_2+1},\dots, \alpha_n)$,  and let $\beta' \coloneqq (\beta_2,\dots, \beta_{n'})$. Then, it can be verified that $[\alpha',\beta']\in\cI_{n-2,n'-1}$ (c.f. Section \ref{ss_mnt}). Let $\tau':\{1,\dots, 2(n'-1)\}\to \{1,\dots, n'-1\}$ be given by
\begin{align*}
\tau'(i) \coloneqq \begin{cases}
\tau(i)-1, & 1\leq i\leq i_1-1,\\
\tau(i+1)-1, & i_1\leq i\leq  i_2-2,\\
\tau(i+2)-1, & i_2-1\leq i\leq 2(n'-1).
\end{cases}
\end{align*}
Then, we can also show that $\tau'\in \cK_{n-2,n'-1}^{\alpha',\beta'}$. Moreover, $\cX_{t,x}^1$ can be represented as   follows,
\begin{align*}
\cX_{t,x}^1=&\prod_{i=1}^{n-2}\Big(\int_{\RR} p_{t}(x-z)u_0(dz)\Big)^{1-\alpha'_i} \int_{\RR^{n'-1}}dz_2\cdots dz_{n'}\prod_{i=1}^{n'-1}\Big(\int_{\RR}p_{s_{i+1}}(z_{i+1}-z)u_0(dz)\Big)^{1-\beta'_i}\\
&\times \prod_{i=1}^{|\alpha'|}p(t-s_{\tau'(i)+1},x-z_{\tau'(i)+1})\prod_{i=|\alpha'|+1}^{2(n'-1)}p(s_{\iota_{\beta'}(i-|\alpha'|)+1}-s_{\tau'(i)+1},z_{\iota_{\beta'}(i-|\alpha'|)+1}-z_{\tau'(i)+1}).
\end{align*}
By using the induction hypothesis, we have
\begin{align}\label{ine_prest1n}
\cX_{t,x}^1\leq (2\pi)^{-\frac{n'-1}{2}}K_2^{n-n'-1} \prod_{i=2}^{n'}(t-s_i)^{-\frac{1}{2}}
\end{align}
for all $(t,x)\in \RR_+\times \RR$, under Hypothesis \ref{hyp_1}; and
\begin{align}\label{ine_prest2n}
 \cX_{t,x}^1\leq (2\pi)^{-\frac{n'-1}{2}}(2L)^{n-n'-1} t^{-\gamma(n-n'-1)} \prod_{i=2}^{n'}(t-s_i)^{-\frac{1}{2}},
\end{align}
for all $(t,x)\in [C_x ,\infty)\times \RR$ under Hypothesis \ref{hyp_2}.

Hence, under Hypothesis \ref{hyp_1}, inequality \eqref{ine_prest1} is a consequence of inequalities \eqref{ine_prest11} and \eqref{ine_prest1n}; and under Hypothesis \ref{hyp_2}, inequality \eqref{ine_prest2} is a consequence of inequalities \eqref{ine_prest21} and \eqref{ine_prest2n}. This completes the proof of this lemma under the assumption that $\beta_1=0$.

{\bf Case 2.} Suppose that $\beta_1=1$, then there exists $j\geq 2$ such that $\tau(|\alpha|+1)=j$. Thus we find the following expression in the integrand of $\cX_{t,x}$,
\[
p(t-s_1,x-z_1)p(t-s_1,x-z_1)p(s_1-s_{j},z_1-z_j).
\]
Integrating in $z_1$ and by a similar argument as in Case 1, we have 
\begin{align}\label{inq_cx1}
 \cX_{t,x}\leq \frac{1}{\sqrt{2\pi}}(t-s_1)^{-\frac{1}{2}}\cX_{t,x}^2,
\end{align}
where
\begin{align}\label{def_cx'}
\cX_{t,x}^2 \coloneqq & \prod_{i=1}^n\Big(\int_{\RR} p_{t}(x-z)u_0(dz)\Big)^{1-\alpha_i} \int_{\RR^{n'-1}}dz_2\cdots dz_{n'}\prod_{i=2}^{n'}\Big(\int_{\RR}p_{s_i}(z_i-z)u_0(dz)\Big)^{1-\beta_i}\nonumber\\
&\times p\Big(\frac{1}{2}(t-s_1)+(s_1-s_j),x-z_j\Big)\prod_{\substack{1\leq i\leq |\alpha|\\ i\notin \{i_1,i_2\}}}p(t-s_{\tau(i)},x-z_{\tau(i)})\nonumber\\
&\times \prod_{i=|\alpha|+2}^{2n'}p(s_{\iota_{\beta}(i-|\alpha|)}-s_{\tau(i)},z_{\iota_{\beta}(i-|\alpha|)}-z_{\tau(i)})\nonumber\\
\leq &\sqrt{2}\prod_{i=1}^n\Big(\int_{\RR} p_{t}(x-z)u_0(dz)\Big)^{1-\alpha_i} \int_{\RR^{n'-1}}dz_2\cdots dz_{n'}\prod_{i=2}^{n'}\Big(\int_{\RR}p_{s_i}(z_i-z)u_0(dz)\Big)^{1-\beta_i}\nonumber\\
&\times p(t-s_j,x-z_j)\prod_{\substack{1\leq i\leq |\alpha|\\ i\notin \{i_1,i_2\}}}p(t-s_{\tau(i)},x-z_{\tau(i)}) \nonumber\\
&\times \prod_{i=|\alpha|+2}^{2n'}p(s_{\iota_{\beta}(i-|\alpha|)}-s_{\tau(i)},z_{\iota_{\beta}(i-|\alpha|)}-z_{\tau(i)}).
\end{align}
Let $\alpha'' \coloneqq (1,\alpha_1,\dots,\alpha_{i_1-1},\alpha_{i_1+1},\dots \alpha_{i_2-1},\alpha_{i_2+1},\dots, \alpha_n)$,  and let $\beta'' \coloneqq (\beta_2,\dots, \beta_{n'})$. Then, $[\alpha'',\beta'']\in\cI_{n-1,n'-1}$. Let $\tau'':\{1,\dots, 2(n'-1)\}\to \{1,\dots, n'-1\}$ be given by
\begin{align*}
\tau''(i) \coloneqq \begin{cases}
j-1, & i=1\\
\tau(i-1)-1, & 2\leq i\leq i_1,\\
\tau(i)-1, & i_1< i< i_2,\\
\tau(i+1)-1, & i_2\leq i\leq |\alpha''|,\\
\tau(i+2)-1, & |\alpha''|< i\leq 2(n'-1).
\end{cases}
\end{align*}
Then, $\tau''\in \cK_{n-1,n'-1}^{\alpha'',\beta''}$, and we can rewrite \eqref{def_cx'} as follows
\begin{align*}
\cX_{t,x}^2 \leq &\sqrt{2}\prod_{i=1}^{n-1}\Big(\int_{\RR} p_{t}(x-z)u_0(dz)\Big)^{1-\alpha''_i} \int_{\RR^{n'-1}}dz_2\cdots dz_{n'}\prod_{i=1}^{n'-1}\Big(\int_{\RR}p_{s_{i+1}}(z_{i+1}-z)u_0(dz)\Big)^{1-\beta''_i}\nonumber\\
&\times \prod_{i=1}^{|\alpha''|}p(t-s_{\tau''(i)+1},x-z_{\tau'(i)+1}) \prod_{i=|\alpha''|+1}^{2(n'-1)}p(s_{\iota_{\beta''}(i-|\alpha''|)+1}-s_{\tau''(i)+1},z_{\iota_{\beta''}(i-|\alpha''|)+1}-z_{\tau''(i)+1}).
\end{align*}
By induction hypothesis again, we have
\begin{align*}
\cX_{t,x}^2 \leq \sqrt{2}(2\pi)^{-\frac{n'-1}{2}}K_2^{n-n'} \prod_{i=2}^{n'}(t-s_i)^{-\frac{1}{2}}
\end{align*}
for all $(t,x)\in \RR_+\times \RR$, under Hypothesis \ref{hyp_1}; and
\begin{align}\label{ine_prest2n1}
 \cX_{t,x}^2 \leq \sqrt{2}(2\pi)^{-\frac{n'-1}{2}}(2L)^{n-n'} t^{-\gamma(n-n'-1)} \prod_{i=2}^{n'}(t-s_i)^{-\frac{1}{2}},
\end{align}
for all $(t,x)\in [C_x ,\infty)\times \RR$ under Hypothesis \ref{hyp_2}.

Thus, under Hypothesis \ref{hyp_1}, inequality \eqref{ine_prest1} is a consequence of inequality \eqref{inq_cx1}, inequality \eqref{ine_prest2} is a consequence of inequalities \eqref{inq_cx1} and \eqref{ine_prest2n1}. The proof of this lemma is thus complete.
\end{proof}

\subsection{The lower bound}\label{ss_lb}
Recalling moment formula \eqref{for_mmt0}, it follows that for all $n\geq 2$,
\begin{align*}
\EE (u_{t}(x)^n)\geq &\Big(\int_{\RR} p_{t}(x-z)u_0(dz)\Big)^n+\sum_{(\alpha,\beta,\tau)\in \cJ_{n,n-1}}\prod_{i=1}^n\Big(\int_{\RR} p_{t}(x-z)u_0(dz)\Big)^{1-\alpha_i}\nonumber\\
&\times \int_{\mathbb{T}_{n-1}^t}d\bs_{n-1}\int_{\RR^{n-1}}d\bz_{n-1}\prod_{i=1}^{n-1}\Big(\int_{\RR}p_{s_i}(z_i-z)u_0(dz)\Big)^{1-\beta_i} \nonumber\\
&\times \prod_{i=1}^{|\alpha|}p(t-s_{\tau(i)},x-z_{\tau(i)})\prod_{i=|\alpha|+1}^{2(n-1)}p(s_{\iota_{\beta}(i-|\alpha|)}-s_{\tau(i)},z_{\iota_{\beta}(i-|\alpha|)}-z_{\tau(i)}).
\end{align*}
In fact, for every $(\alpha,\beta,\tau)\in \cJ_{n,n-1}$, we know that $\alpha = \alpha_* \coloneqq \1_{n}$ and $\beta=\beta_* \coloneqq (\1_{n-2},0)$, where $\1_{n}$ denotes the $n$-dimensional vector with unit coordinates. Thus, we can write
\begin{align*}
\EE (u_{t}(x)^n)\geq& \Big(\int_{\RR} p_{t}(x-z)u_0(dz)\Big)^n\\
&+\sum_{\tau\in \cK_{n,n-1}^{\alpha_*,\beta_*}} \int_{\mathbb{T}_{n-1}^t}d\bs_{n-1}\int_{\RR^{n-1}}d\bz_{n-1}\Big(\int_{\RR}p_{s_{n-1}}(z_{n-1}-z)u_0(dz)\Big) \nonumber\\
&\quad\times\prod_{i=1}^{n}p(t-s_{\tau(i)},x-z_{\tau(i)})\prod_{i=n+1}^{2(n-1)}p(s_{\iota_{\beta}(i-n)}-s_{\tau(i)},z_{\iota_{\beta}(i-n)}-z_{\tau(i)}).
\end{align*}

\noindent Let $(\theta_1,\dots,\theta_n)\in (0,1]^{n}$. Fix $\tau\in \cK_{n,n-1}^{\alpha_*,\beta_*}$, and let
\begin{align}\label{def_ytx}
\cY_{t,x} \coloneqq &\int_{\RR^{n-1}}d\bz_{n-1}\Big(\int_{\RR}p_{s_{n-1}}(z_{n-1}-z)u_0(dz)\Big)\prod_{i=1}^{n}p(\theta_i(t-s_{\tau(i)}),x-z_{\tau(i)})\nonumber\\
&\times\prod_{i=n+1}^{2(n-1)}p(s_{\iota_{\beta}(i-n)}-s_{\tau(i)},z_{\iota_{\beta}(i-n)}-z_{\tau(i)}).
\end{align}

 The next lemma is the main result of this subsection.
\begin{lemma}\label{lmm_lb}
Let $\cY_{t,x}$ be defined as in \eqref{def_ytx}. Then,
\begin{align}\label{inq_lb1}
\cY_{t,x}\geq K_1(4\pi)^{-\frac{n}{2}} \prod_{i=1}^{n-1}(t-s_i)^{-\frac{1}{2}}, 
\end{align}
under Hypothesis \ref{hyp_1} for all $(t,x)\in \RR_+\times \RR$; 
and
\begin{align}\label{inq_lb2}
\cY_{t,x}\geq \frac{1}{2}L (4\pi)^{-\frac{n}{2}}t^{-\gamma}\prod_{i=1}^{n-1}(t-s_i)^{-\frac{1}{2}}
\end{align}
under Hypothesis \ref{hyp_2} for all $(t,x)\in [(\sum_{i=1}^n\theta_i^{-1})C_x ,\infty)\times \RR$, where $C_x $ is a positive constant such that for all $t\geq C_x $,
\[
t^{\gamma}\int_{\RR}p_t(x-y)u_0(dy)\geq \frac{1}{2}L
\]
\end{lemma}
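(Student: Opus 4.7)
The plan is to mirror the induction argument of Lemma~\ref{lmm_ub}, adapted to produce lower bounds and to track the parameters $\theta_1,\ldots,\theta_n$. Since $\alpha_*=\1_n$ and $\beta_*=(\1_{n-2},0)$, the same combinatorial constraint on $\tau$ that was invoked at the start of the proof of Lemma~\ref{lmm_ub} forces both preimages of the value $1$ under $\tau$ to lie in $\{1,\ldots,n\}$. Writing these preimages as $i_1<i_2$, the pair $p(\theta_{i_1}(t-s_1),x-z_1)\,p(\theta_{i_2}(t-s_1),x-z_1)$ appears in the integrand of $\cY_{t,x}$. For $n\geq 3$ one also has $\beta_1=1$, so $z_1$ additionally appears once in the $\beta$-product as $p(s_1-s_j,z_1-z_j)$ for a unique $j=\tau(|\alpha_*|+1)\geq 2$; for $n=2$ there is no $\beta$-product and $z_1$ instead carries the factor $\int p_{s_1}(z_1-z)u_0(dz)$.

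First I would record the elementary identity
\begin{equation*}
p(\theta_{i_1}a,y)\,p(\theta_{i_2}a,y)
=\frac{1}{\sqrt{2\pi a(\theta_{i_1}+\theta_{i_2})}}\,p(\tilde\theta\, a,y),
\qquad \tilde\theta=\frac{\theta_{i_1}\theta_{i_2}}{\theta_{i_1}+\theta_{i_2}},
\end{equation*}
obtained by completing the square (it specializes to $p(a,y)^2=(4\pi a)^{-1/2}p(a/2,y)$ when $\theta_{i_1}=\theta_{i_2}=1$). Applying it with $a=t-s_1$ and then invoking the semigroup property on the $z_1$-integral yields the exact identity
\begin{equation*}
\int_\RR p(\theta_{i_1}(t-s_1),x-z_1)\,p(\theta_{i_2}(t-s_1),x-z_1)\,p(s_1-s_j,z_1-z_j)\,dz_1 = \frac{p\bigl(\tilde\theta(t-s_1)+(s_1-s_j),\,x-z_j\bigr)}{\sqrt{2\pi (t-s_1)(\theta_{i_1}+\theta_{i_2})}}.
\end{equation*}
Writing the new time as $\theta_j'(t-s_j)$ with $\theta_j'=[\tilde\theta(t-s_1)+(s_1-s_j)]/(t-s_j)$, a convex-combination computation gives $\theta_j'\in[\tilde\theta,1]$. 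The residual integrand over $z_2,\ldots,z_{n-1}$ then matches the form of an $(n-1)$-variable version of $\cY$ whose list of parameters consists of $(\theta_k)_{k\neq i_1,i_2}$ augmented by the new $\theta_j'$. Crucially, $(\theta_j')^{-1}\leq \tilde\theta^{-1}=\theta_{i_1}^{-1}+\theta_{i_2}^{-1}$, so the hypothesis $t\geq(\sum_i\theta_i^{-1})C_x$ passes unchanged to the reduced problem.

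Induction on $n$ then delivers the lemma. The base case $n=2$ corresponds to $\beta_*=(0)$, for which the single integration against $u_0$ yields $(2\pi(t-s_1)(\theta_1+\theta_2))^{-1/2}\int p_T(x-z)u_0(dz)$ with $T=\tilde\theta(t-s_1)+s_1=\tilde\theta t+(1-\tilde\theta)s_1\in[\tilde\theta t,\,t]$. Under Hypothesis~\ref{hyp_1} the integral is at least $K_1$ because $p_T$ integrates to one and $u_0\geq K_1$; under Hypothesis~\ref{hyp_2} the assumption $t\geq\tilde\theta^{-1}C_x$ combined with $T\geq \tilde\theta t$ gives $T\geq C_x$, hence $\int p_T(x-z)u_0(dz)\geq \tfrac{1}{2}LT^{-\gamma}\geq \tfrac{1}{2}Lt^{-\gamma}$ since $T\leq t$. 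Using $\theta_{i_1}+\theta_{i_2}\leq 2$ at each of the $n-1$ reductions contributes the factor $(4\pi)^{-1/2}$, which compounds to the required $(4\pi)^{-n/2}$ in \eqref{inq_lb1}--\eqref{inq_lb2}.

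The main obstacle I anticipate is the bookkeeping of indices when recasting the residual integrand as an $(n-1)$-variable instance of \eqref{def_ytx}. Concretely, one must construct an explicit reduced triple $(\alpha_*^{(n-1)},\beta_*^{(n-1)},\tau')$ and verify that $\tau'\in\cK_{n-1,n-2}^{\alpha_*^{(n-1)},\beta_*^{(n-1)}}$, in precise analogy with the index surgery carried out in Case~2 of the proof of Lemma~\ref{lmm_ub}. In particular, one has to check that after relabeling $z_2,\ldots,z_{n-1}$ as $z_1,\ldots,z_{n-2}$, the constraint that the second preimage of the value $1$ under $\tau'$ still lies in $\{1,\ldots,|\alpha_*^{(n-1)}|\}$ is preserved, so that the induction step can be iterated and the combined identity above continues to apply at the next level.
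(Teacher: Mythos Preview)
Your proposal is correct and follows essentially the same route as the paper: induction on $n$, with the product identity $p(\theta_{i_1}a,y)p(\theta_{i_2}a,y)=(2\pi a(\theta_{i_1}+\theta_{i_2}))^{-1/2}p(\tilde\theta a,y)$ and the semigroup property supplying the inductive step, the observation $\theta_j'\in[\tilde\theta,1]$ yielding $(\theta_j')^{-1}\leq\theta_{i_1}^{-1}+\theta_{i_2}^{-1}$ so that the time-threshold propagates, and the same Case~2 index surgery from Lemma~\ref{lmm_ub} producing the reduced triple. Your treatment of the $n=2$ base case is in fact cleaner than the paper's (which drops the second $\theta$-factor in its displayed computation); note, though, a harmless arithmetic slip in your final sentence: $n-1$ applications of the bound $\theta_{i_1}+\theta_{i_2}\le 2$ compound to $(4\pi)^{-(n-1)/2}$, not $(4\pi)^{-n/2}$, which is a stronger lower bound and hence still implies \eqref{inq_lb1}--\eqref{inq_lb2}.
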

\begin{proof}
We prove this lemma following similar ideas as in Lemma \ref{lmm_ub}. Firstly, suppose $n=2$, we have
\begin{align*}
\cY_{t,x}=&\int_{\RR}d\bz_1\Big(\int_{\RR}p_{s_1}(z_1-z)u_0(dz)\Big)p(\theta_1(t-s_1),x-z_1)\\
=&\int_{\RR}p_{\theta_1(t-s_1)+s_1}(x-z)u_0(dz).
\end{align*}
Then, it is clear that this lemma holds for $n=2$. In the next step, choose $i_1<i_2$ such that $\tau(i_1)=\tau(i_2)=1$ and choose $j$ such that $\tau(n+1)=j$. Then, $1\leq i_1<i_2\leq n$ and $j\geq 2$, and we can write $\cY_{t,x}$ as follows,
\begin{align*}
\cY_{t,x}=&\int_{\RR^{n-2}}dz_2\cdots dz_{n-1}\Big(\int_{\RR}p_{s_{n-1}}(z_{n-1}-z)u_0(dz)\Big)\prod_{\substack{1\leq i\leq n\\ i\notin\{i_1,i_2\}}}p(\theta_i(t-s_{\tau(i)}),x-z_{\tau(i)})\nonumber\\
&\times\prod_{i=n+2}^{2(n-1)}p(s_{\iota_{\beta}(i-n)}-s_{\tau(i)},z_{\iota_{\beta}(i-n)}-z_{\tau(i)})\\
&\times \int_{\RR}dz_1p(\theta_{i_1}(t-s_1),z-z_1)p(\theta_{i_2}(t-s_1),z-z_1)p(s_1-s_j,z_1-z_j).
\end{align*}
Taking account of the fact that
\[
p_s(x)p_t(x)=(2\pi(s+t))^{-\frac{1}{2}}p\Big(\frac{st}{s+t},x\Big),
\]
for all $s,t\in \RR_+$ and $x\in \RR$, we can write
\begin{align*}
p(\theta_{i_1}(t-s_1),z-z_1)p(\theta_{i_2}(t-s_1),z-z_1)=&\frac{p\Big(\frac{\theta_{i_1}\theta_{i_2}}{\theta_{i_1}+\theta_{i_2}}(t-s_1),z-z_1\Big)}{(2\pi (\theta_{i_1}+\theta_{i_2})(t-s_1))^{\frac{1}{2}}}\\
\geq &(4\pi (t-s_1))^{-\frac{1}{2}}p\Big(\frac{\theta_{i_1}\theta_{i_2}}{\theta_{i_1}+\theta_{i_2}}(t-s_1),z-z_1\Big).
\end{align*}
It follows that
\begin{align*}
&\int_{\RR}dz_1p(\theta_{i_1}(t-s_1),z-z_1)p(\theta_{i_2}(t-s_1),z-z_1)p(s_1-s_j,z_1-z_j)\\
\geq&(4\pi (t-s_1))^{-\frac{1}{2}}p\Big(\frac{\theta_{i_1}\theta_{i_2}}{\theta_{i_1}+\theta_{i_2}}(t-s_1)+(s_1-s_j),z-z_j\Big).
\end{align*}
On the other hand, it is clear that
\begin{align*}
\frac{\theta_{i_1}\theta_{i_2}}{\theta_{i_1}+\theta_{i_2}}(t-s_j)\leq \frac{\theta_{i_1}\theta_{i_2}}{\theta_{i_1}+\theta_{i_2}}(t-s_1)+(s_1-s_j)\leq t-s_j
\end{align*}
Thus, let
\[
\theta':=\Big[\frac{\theta_{i_1}\theta_{i_2}}{\theta_{i_1}+\theta_{i_2}}(t-s_1)+(s_1-s_j)\Big]/(t-s_j),
\]
we have $\theta'\in [\frac{\theta_{i_1}\theta_{i_2}}{\theta_{i_1}+\theta_{i_2}},1]\subset (0,1]$. Furthermore, we can write,
\begin{align*}
\cY_{t,x}\geq &(4\pi)^{-\frac{1}{2}}(t-s_1)^{-\frac{1}{2}}\int_{\RR^{n-2}}dz_2\cdots dz_{n-1}\Big(\int_{\RR}p_{s_{n-1}}(z_{n-1}-z)u_0(dz)\Big)p(\theta'(t-s_j),z-z_j)\nonumber\\
&\times\prod_{\substack{1\leq i\leq n\\ i\notin\{i_1,i_2\}}}p(\theta_i(t-s_{\tau(i)}),x-z_{\tau(i)})\prod_{i=n+2}^{2(n-1)}p(s_{\iota_{\beta}(i-n)}-s_{\tau(i)},z_{\iota_{\beta}(i-n)}-z_{\tau(i)}).
\end{align*}
Similarly to   Case 2 of the proof of Lemma \ref{lmm_ub}, we can find $\tau'\in \cK_{n-1,n-2}^{\alpha_0',\beta_0'}$ where $\alpha_0'=\1_{n-1}$ and $\beta_0'=(\1_{n-3},0)$ such that 
\begin{align}\label{ine_cy}
\cY_{t,x}\geq &(4\pi)^{-\frac{1}{2}}(t-s_1)^{-\frac{1}{2}}\cY_{t,x}^0,
\end{align}
with
\begin{align*}
\cY_{t,x}^0=&\int_{\RR^{n-2}}dz_2\cdots dz_{n-1}\Big(\int_{\RR}p_{s_{n-1}}(z_{n-1}-z)u_0(dz)\Big)\prod_{i=1}^{n-1}p(\theta''_i(t-s_{\tau'(i)+1}),x-z_{\tau'(i)+1})\\
&\times\prod_{i=n}^{2(n-2)}p(s_{\iota_{\beta'}(i-n-1)+1}-s_{\tau'(i)+1},z_{\iota_{\beta'}(i-n-1)+1}-z_{\tau'(i)+1}),
\end{align*}
and
\begin{align*}
\theta''_i=\begin{cases}
\theta', & i=1\\
\theta_{i-1}, & 2\leq i\leq i_1-2,\\
\theta_{i}, & i_1-1\leq i\leq i_2-1,\\
\theta_{i+1}, & i_2\leq i\leq n.
\end{cases}
\end{align*}
Assume Hypothesis \ref{hyp_1}. By using the induction hypothesis, we can write
\begin{align}\label{ine_cy''}
\cY_{t,x}^0\geq &K_1(4\pi)^{\frac{n-1}{2}}\prod_{i=2}^{n-1}(t-s_i)^{-\frac{1}{2}}.
\end{align}
Inequality \eqref{inq_lb1} follows from \eqref{ine_cy} and \eqref{ine_cy''}.

On the other hand, assume Hypothesis \ref{hyp_2}. Induction hypothesis implies that for all $(t,x)\in [(\sum_{i=1}^{n-1}(\theta_i'')^{-1})C_x ,\infty)\times \RR$,
\begin{align}\label{ine_cy''2}
\cY_{t,x}^0\geq \frac{1}{2}L(4\pi)^{\frac{n-1}{2}}t^{-\gamma}\prod_{i=2}^{n-1}(t-s_i)^{-\frac{1}{2}}.
\end{align}
Recall the construction of $\{\theta''_i:i=1,\dots, n-1\}$ and the fact that  $\theta'\in [\frac{\theta_{i_1}\theta_{i_2}}{\theta_{i_1}+\theta_{i_2}},1]$, we have
\begin{align*}
\sum_{i=1}^{n-1}(\theta_i'')^{-1}=&\sum_{\substack{1\leq i\leq n\\ i\notin \{i_1,i_2\}}}\theta_i^{-1}+(\theta')^{-1}\leq \sum_{\substack{1\leq i\leq n\\ i\notin \{i_1,i_2\}}}\theta_i^{-1}+(\theta_{i_1}^{-1}+\theta_{i_2}^{-1})=\sum_{i=1}^{n}\theta_i^{-1}.
\end{align*}
Thus inequality \eqref{ine_cy''2} holds for all $(t,x)\in[(\sum_{i=1}^n\theta_i^{-1})C_x ,\infty)\times \RR\subset [(\sum_{i=1}^{n-1}(\theta_i'')^{-1})C_x ,\infty)\times \RR$. Then, inequality \eqref{inq_lb2} is straightforward. The proof of this lemma is complete.
\end{proof}

\subsection{Completion of the proof of Theorem \ref{thm_tsbd}}\label{ss_cpl}
The proof of Theorem \ref{thm_tsbd} follows from Lemmas \ref{lmm_ub} and \ref{lmm_lb} and the next lemma of the cardinality of $\cJ_{n,n'}$.
\begin{lemma}[{\cite[Lemma 4.3]{arxiv-21-hu-kouritzin-xia-zheng}}]\label{lmm_ncj}
Let $\cJ_{n,n'}$ be defined as in \eqref{def_jnn'} with some positive integer $n$ and nonnegative integer $n'\leq n-1$. Then,
\begin{align*}
|\cJ_{n,n'}|=\frac{n!(n-1)!}{2^{n'}(n-n')!(n-n'-1)!}
\end{align*}
where by convention $0!=1$.
\end{lemma}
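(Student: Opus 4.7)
The plan is to count $|\cJ_{n,n'}|$ combinatorially by interpreting each triple $(\alpha,\beta,\tau)\in\cJ_{n,n'}$ as a sequence of $n'$ unordered binary coalescence events on a labeled pool of $n$ initial lineages. This interpretation is natural because each triple parameterizes a summand in the moment formula \eqref{for_mmt0} corresponding to a Dynkin--Kuznetsov-type tree with $n$ leaves and $n'$ internal binary nodes at the branching times $s_1>\cdots>s_{n'}$, while the auxiliary data $\alpha\in\{0,1\}^n$ and $\beta\in\{0,1\}^{n'}$ subject to $|\alpha|+|\beta|=2n'$ (with $\beta_{n'}=0$) record which of the lineages serve as the two inputs to each merger.

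I would proceed by induction on $n'$. The base case $n'=0$ yields $\cJ_{n,0}=\{(\mathbf{0}_n,\partial,\partial)\}$, giving $|\cJ_{n,0}|=1$, which agrees with $\frac{n!(n-1)!}{2^0 \cdot n!\,(n-1)!}$. For the inductive step I would isolate the earliest branching event at time $s_1$ and follow the same dichotomy as in the proof of Lemma \ref{lmm_ub}: in the case $\beta_1=0$ the merger at $s_1$ joins two distinct $x$-level indices $i_1<i_2\in\{1,\ldots,n\}$, contributing a factor of $\binom{n}{2}$ and reducing the remaining structure to an element of a set of the same form but with parameters $(n-1,n'-1)$ after the merged pair is collapsed to a single new lineage at time $s_1$; in the case $\beta_1=1$ one merged index is at the $x$-level and the other corresponds to some $z_j$ with $j\ge 2$, also yielding a binomial count upon careful accounting. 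The key point is that in either case the number of choices for the pair fed into the earliest merger is $\binom{n}{2}$, and the remaining $(n'-1)$ merger events act on an active pool of $n-1$ lineages. Iterating gives $|\cJ_{n,n'}|=\prod_{k=1}^{n'}\binom{n-k+1}{2}$, and a direct expansion of this product telescopes to $\frac{n!(n-1)!}{2^{n'}(n-n')!(n-n'-1)!}$.

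The main obstacle is establishing that the two-case reduction counts each triple exactly once and is fully consistent with the admissibility constraints defining $\cK_{n,n'}^{\alpha,\beta}$. In particular, the factor $2^{n'}$ in the denominator must be traced carefully to its source: although $\tau:\{1,\ldots,2n'\}\to\{1,\ldots,n'\}$ is an ordered function on its $2n'$-element domain, each coalescence pair is intrinsically unordered, so swapping the two input slots of any single merger yields the same element of $\cJ_{n,n'}$. Once this symmetry is properly quotiented and the shrinking of the "active" index set is tracked consistently through the recursion (including the correct shift of the indicator vectors $\alpha,\beta$ after removing the $s_1$-merger), the telescoping calculation is routine and yields the claimed formula.
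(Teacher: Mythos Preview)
The paper does not supply its own proof of this lemma; it is quoted verbatim from \cite[Lemma 4.3]{arxiv-21-hu-kouritzin-xia-zheng} and used as a black box. So there is no argument in the paper to compare against, and your write-up would stand on its own merits.

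Your overall strategy---a bijection between triples $(\alpha,\beta,\tau)\in\cJ_{n,n'}$ and ordered sequences of $n'$ binary coalescence events on an initial pool of $n$ labelled lineages, yielding $|\cJ_{n,n'}|=\prod_{k=1}^{n'}\binom{n-k+1}{2}$---is correct and is the natural combinatorial reading of the moment formula. The telescoping of that product to $\frac{n!(n-1)!}{2^{n'}(n-n')!(n-n'-1)!}$ is immediate, and your remark that the factor $2^{n'}$ arises because the two inputs to each merger are unordered is accurate.

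There is, however, a genuine misreading in your inductive step. You import the dichotomy $\beta_1=0$ versus $\beta_1=1$ from the proof of Lemma~\ref{lmm_ub} and claim that when $\beta_1=1$ ``one merged index is at the $x$-level and the other corresponds to some $z_j$ with $j\ge 2$.'' That is not what $\beta_1$ encodes. By definition, $\beta_1$ records whether the particle $z_1$ \emph{created} at time $s_1$ subsequently enters a later merger (namely $\tau(|\alpha|+1)=j\ge 2$); it says nothing about the inputs to merger $1$. In fact, condition~(ii) on $\tau$ forces $\tau(i)>\iota_\beta(i-|\alpha|)\ge 1$ for every $i>|\alpha|$, so no internal particle can ever feed into merger $1$: both inputs to the $s_1$-merger are always leaves, irrespective of $\beta_1$. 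Consequently the case split is unnecessary for the counting; the correct (and simpler) induction is: the two preimages $\tau^{-1}(1)$ lie in $\{1,\dots,|\alpha|\}$ and pick out an unordered pair of leaves ($\binom{n}{2}$ choices), and deleting this pair while relabelling $z_1$ as a new leaf yields a bijection with $\cJ_{n-1,n'-1}$. Once you replace the erroneous $\beta_1$ discussion by this observation, the argument is complete.
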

\begin{proof}[Proof of Theorem \ref{thm_tsbd}]
We only provide the proof of this Theorem under Hypothesis \ref{hyp_1}. It can be easily modified to cover cases under Hypothesis \ref{hyp_2}. Note that the case $n=1$ is trivial. Thus we assume that $n\geq 2$. In fact, by Theorem \ref{thm_mnt}, Lemmas \ref{lmm_ub} and \ref{lmm_lb}, we can write
\begin{align}\label{inq_tsbd11}
K_1(4\pi)^{-\frac{n}{2}}|\cJ_{n,n-1}|f_{n-1}(t)\leq \EE (u(t,x)^n) \leq \sum_{n'=0}^{n-1}(2\pi)^{-\frac{n'}{2}}K_2^{n-n'}|\cJ_{n,n'}|f_{n'}(t) 
\end{align}
where
\[
f_{n'}(t):=\int_{\mathbb{T}_{n'}^t}d\bs_{n'}\prod_{i=1}^{n'}(t-s_i)^{-\frac{1}{2}}=\frac{2^{n'}}{\Gamma(n')}t^{\frac{1}{2}n'}.
\]
Then, inequality \eqref{inq_tsbd1} follows from \eqref{inq_tsbd11}, Lemma \ref{lmm_ncj} and Stirling's formula. The proof of this theorem is complete.
\end{proof}

\section{Proofs of Propositions \ref{prop_tail} and \ref{thm_ld}}
In this section, we will provide the proofs of Propositions \ref{prop_tail} and \ref{thm_ld}.

\begin{proof}[Proof of Proposition \ref{prop_tail}]
Assume Hypothesis \ref{hyp_1}. In the first step, we prove the upper bound, namely, 
\begin{align}\label{inq_te1}
\limsup_{z\to\infty}\frac{\log\PP(|u_t(x)|>z)}{z}<-C_2t^{-\frac{1}{2}}.
\end{align}
Applying inequality \eqref{inq_tsbd1}, with $\alpha=\frac{(K^*)^{-\frac{1}{2}}t^{-\frac{1}{2}}}{2}$, we can write for any $x\in \RR$,
\begin{align*}
\EE [\exp(\alpha u_t(x))]=\sum_{n=0}^{\infty}\frac{1}{n!} \alpha^n\EE (u_t(x)^n)\leq e^{\alpha K^*}+\big(\sqrt{t}(1-\alpha K^*\sqrt{t})\big)^{-1}<\infty.
\end{align*}
Thus by using Markov's inequality, we get
\begin{align}\label{prob>z}
\PP(u_t(x)>z)\leq e^{-\alpha z} \big[e^{\alpha K^*}+\big(\sqrt{t}(1-\alpha K^*\sqrt{t})\big)^{-1}\big],
\end{align}
and thus
\begin{align*}
\frac{\log\PP(u_t(x)>z)}{z}\leq -\alpha+\frac{\xi}{z}=\frac{1}{2}(K^*)^{-\frac{1}{2}}t^{-\frac{1}{2}}+\frac{\xi}{z},
\end{align*}
where $\xi=\log [e^{\alpha K^*}+(\sqrt{t}(1-\alpha K^*\sqrt{t}))^{-1}]$ is a finite number. This proves inequality \eqref{inq_te1}. 

In the next step, we will show the next inequality,
\begin{align}\label{inq_te2}
-C_1t^{-\frac{1}{2}}\le \liminf_{z\to\infty}\frac{\log\PP(|u_t(x)|>z)}{z}.
\end{align}
By using inequality \eqref{inq_tsbd1} and the Paley-Zygmund inequality (c.f. \cite[Lemma 7.3]{ams-14-khoshnevisan}), we can write
\begin{align*}
\PP\Big(u_t(x)> \frac{1}{2} K_*&(n!)^{\frac{1}{n}}t^{\frac{n-1}{2n}}\Big)\geq
\PP \Big(u_t(x)\geq \frac{1}{2}\EE (u_t(x)^n)^{\frac{1}{n}}\Big) \geq  \bigg(1-\frac{1}{2^n}\bigg)^2 \frac{[\EE (u_t(x)^n)]^2}{\EE (u_t(x)^{2n})}\\
\geq &
 \bigg(1-\frac{1}{2^n}\bigg)^2\frac{K_*^{2n}(1+n!t^{\frac{1}{2}(n-1)})^2}{(K^*)^{2n}(1+(2n)!t^{\frac{1}{2}(2n-1)})}
\geq  \frac{9K_*^{2n}(n!t^{\frac{1}{2}(n-1)})^2}{32(K^*)^{2n}(2n)!t^{\frac{1}{2}(2n-1)}},
\end{align*}
for $n$ large enough. Taking account of Stirling's formula, we can further deduce that
\begin{align}\label{inq_plb1}
\PP\Big(u_t(x)&> \frac{1}{2}K_*(2\pi)^{\frac{1}{2n}}(n+1)^{1+\frac{1}{2n}}e^{-(1+\frac{1}{n})}t^{\frac{n-1}{2n}}\Big)\geq \PP\Big(u_t(x)\geq \frac{1}{2}K_*(n!)^{\frac{1}{n}}t^{\frac{n-1}{2n}}\Big)\geq C^nt^{-\frac{1}{2}}
\end{align}
with some universal constant $C>0$ depending on $K^*$ and $K_*$ for all $n$ large enough. Write 
\[
z=z(n) \coloneqq K_*(2\pi)^{\frac{1}{2n}}(n+1)^{1+\frac{1}{2n}}e^{-(1+\frac{1}{n})}t^{\frac{n-1}{2n}}.
\]
Notice that $\lim_{n\to \infty}(n+1)^{\frac{1}{2n}} = 1$. It follows that for $n$ large enough, $ z\leq 2K_*(2\pi)^{\frac{1}{2}}(n+1)t^{\frac{1}{2}}$,
and thus $n\geq f(t)z-1$
with $f(t)=(2K_*(2\pi)^{\frac{1}{2}}t^{\frac{1}{2}})^{-1}$.
This allows us to write that for $n$ large enough,
\begin{align*}
\PP(u_t(x)> z)\geq C^nt^{-\frac{1}{2}}\geq C^{f(t)z-1}t^{-\frac{1}{2}}.
\end{align*}
It follows that
\begin{align*}
\frac{\log \PP(u_t(x)> z)}{z}\geq \Big(f(t)-\frac{1}{z}\Big)\log C-\frac{\log t}{2z}=\Big(\frac{1}{2K_*(2\pi)^{\frac{1}{2}}t^{\frac{1}{2}}}-\frac{1}{z}\Big)\log C-\frac{\log t}{2z}.
\end{align*}
Let $n\to\infty$ (and thus $z\to\infty$), we get inequality \eqref{inq_te2}. This proves inequality \eqref{inq_te}. The proof of inequality \eqref{inq_teh2} is quite similar to that of \eqref{inq_te}, we skip it for the sake of conciseness. The proof of this theorem is complete.
\end{proof}

Note that in the proof of inequality \eqref{inq_te2} under Hypothesis \ref{hyp_1}, lower bounds for moments of sBm's of all orders are used. This prevents us to deploy the same method to get a lower bound as in \eqref{inq_te} under Hypothesis \ref{hyp_2}. However, if replacing $z$ in \eqref{inq_te} by $t^{\sigma}$ with some $\sigma \geq \frac{1}{2}$, we can deduce the next theorem, which holds under either Hypothesis \ref{hyp_1} or \ref{hyp_2} (c.f. \cite{ssp-93-iscoe-lee,ap-95-lee-remillard} for related results).

\begin{proof}[Proof of Proposition \ref{thm_ld}]
Replacing $z$ by $t^{\sigma}$ in \eqref{prob>z}, and using the fact that $e^{\alpha K^*}+(\sqrt{t}(1-\alpha K^*\sqrt{t}))^{-1}$ is decreasing in $t$, we can deduce the upper bound in \eqref{inq_ld} under Hypothesis \ref{hyp_1}. The case under Hypothesis \ref{hyp_2} is quite similar, we omit its proof for simplicity. Next, we will show the lower bound under Hypothesis \ref{hyp_1}.  For any $t$, let $n = n(t) \coloneqq  \lfloor 4 e^2 t^{\sigma - \frac{1}{2}}/K_*\rfloor$. Then, for $t$ large enough such that $t^{\frac{1}{2n}} \geq \frac{1}{2}$, we can write
\[
\frac{1}{2}K_*(2\pi)^{\frac{1}{2n}}(n+1)^{1+\frac{1}{2n}}e^{-(1+\frac{1}{n})}t^{\frac{n-1}{2n}} \geq  2 t^{\sigma-\frac{1}{2n}} \geq t^{\sigma}.
\]
 As a consequence of inequality \eqref{inq_plb1}, with a uniform constant $C>0$,
\begin{align*}
\PP (u_t(x) > t^{\sigma}) \geq \PP \Big( u_t (x) > \frac{1}{2}K_*(2\pi)^{\frac{1}{2n}}(n+1)^{1+\frac{1}{2n}}e^{-(1+\frac{1}{n})}t^{\frac{n-1}{2n}} \Big) \geq C^n t^{-\frac{1}{2}},
\end{align*}
and thus
\begin{align*}
t^{\frac{1}{2} - \sigma  }\log \PP (u_t(x) > t^{\sigma}) \geq t^{\frac{1}{2} - \sigma } \Big( \lfloor 4 e^2 t^{\sigma - \frac{1}{2}}/K_*\rfloor \log C - \frac{1}{2}\log (t)\Big).
\end{align*}
This proves the lower bound in \eqref{inq_ld} with $C_2 =  4 e^2 K_*^{-1} \log C$. On the other hand, assuming Hypothesis \eqref{hyp_2}, to apply inequality \eqref{inq_plb1}, we need  $t \geq n C_x$ (see Theorem \ref{thm_tsbd}). It holds for large $t$ as $t \sim n^{\frac{1}{\sigma - 1/2}}$ if $\frac{1}{2} < \sigma < \frac{3}{2}$. This completes the proof of Proposition \ref{thm_ld}.
\end{proof}

\begin{appendix}

\section{A moment formula for sBm's}\label{ss_mnt}

In this section, we provide a moment formula, cited from \cite[Theorem 4.1]{arxiv-21-hu-kouritzin-xia-zheng}, for sBm's.
Let $n$ be any positive integer, and let $n'<n$ be any nonnegative integer. We denote by $\cI_{n,n'}$ the collection of multi-indexes $[\alpha,\beta]=[(\alpha_1,\dots,\alpha_n),(\beta_1,\dots,\beta_{n'})]\in \{0,1\}^{n+n'}$ satisfying
\begin{enumerate}[(i)]
\item $\beta_{n'}=0$.

\item Let $|\alpha|=\sum_{i=1}^n \alpha_i$ and let $|\beta|=\sum_{i=1}^{n'}\beta_i$. Then $|\alpha|+|\beta|=2n'$.
\end{enumerate}
In particular, if $n'=0$, then $\alpha=\mathbf{0}_n$ is the $0$-vector in $\RR^n$ and $\beta$ should be a ``$0$-dimensional'' vector. In this case we write $[\alpha,\beta]=[\mathbf{0}_n,\partial]$. Fix $[\alpha,\beta]\in \cI_{n,n'}$. We introduce a map $\iota_{\alpha}:\{1,\dots, |\alpha|\}\to \{1,\dots, n\}$ by
\[
\iota_{\alpha}(i)=j_i,
\]
where the index $j_i$ is such that $\alpha_{j_i}$ is the $i$-th nonzero coordinate of $\alpha$ for all $i=1,\dots, |\alpha|$. The map $\iota_{\beta}:\{1,\dots, |\beta|\}\to \{1,\dots, n'\}$ is defined in a similar way. 

Given $[\alpha,\beta]\in \cI_{n,n'}$, let $\cK_{n,n'}^{\alpha,\beta}$ be the collection of maps $\tau:\{1,\dots, |\alpha|+|\beta|=2n'\}\to \{1,\dots, n'\}$ satisfying the following properties,
\begin{enumerate}[(i)]
\item For any $k\in \{1,\dots,n'\}$, there exist $1\leq i_1<i_2\leq 2n'$ such that $\tau(i_1)=\tau(i_2)=k$.
\item For all $i\in \{|\alpha|+1,\dots, 2n'\}$, $\tau(i)>\iota_{\beta}(i-|\alpha|)$.
\end{enumerate}
If $n'=0$, we denote $\tau=\partial$.  Finally, we write
\begin{align}\label{def_jnn'}
\cJ_{n,n'}=\{(\alpha,\beta,\tau):[\alpha,\beta]\in \cI_{n,n'},\tau\in \cK_{n,n'}^{\alpha,\beta}\}.
\end{align}
Especially, $\cJ_{n,0} = \{(\mathbf{0}_n,\partial,\partial )\}$.

\begin{theorem}\label{thm_mnt}
Suppose that $u_0\in\cM_F(\RR)$. Let $n$ be a positive integer. Then, for any $(t,x)\in\RR_+\times \RR$, the following identity  holds,
\begin{align}\label{for_mmt0}
\EE (u_{t}(x)^n)=&\sum_{n'=0}^{n-1}\sum_{(\alpha,\beta,\tau)\in \cJ_{n,n'}}\prod_{i=1}^n\Big(\int_{\RR} p_{t}(x-z)u_0(dz)\Big)^{1-\alpha_i}\nonumber\\
&\times \int_{\mathbb{T}_{n'}^t}d\bs_{n'}\int_{\RR^{n'}}d\bz_{n'}\prod_{i=1}^{n'}\Big(\int_{\RR}p_{s_i}(z_i-z)u_0(dz)\Big)^{1-\beta_i} \prod_{i=1}^{|\alpha|}p(t-s_{\tau(i)},x-z_{\tau(i)})\nonumber\\
&\times\prod_{i=|\alpha|+1}^{2n'}p(s_{\iota_{\beta}(i-|\alpha|)}-s_{\tau(i)},z_{\iota_{\beta}(i-|\alpha|)}-z_{\tau(i)}),
\end{align}
where the set $\cJ_{n,n'}$ of triples $(\alpha,\beta,\tau)$ is defined as \eqref{def_jnn'}.
\begin{align*}
\mathbb{T}_{n'}^t=\big\{\bs_{n'}=(s_1,\dots,s_{n'})\in [0,T]^{n'}:0<s_{n'}<s_{n'-1}<\dots<s_1<t\big\},
\end{align*}
and $p(t,x)=p_t(x)$ to avoid long sub-indexes.
\end{theorem}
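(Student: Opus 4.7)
I would proceed by induction on $n$, using the martingale problem characterization of sBm to derive a moment recursion and then unrolling it into the combinatorial sum on the right-hand side of \eqref{for_mmt0}. To avoid pointwise delicacies at the start, I would work with smeared multi-point moments $m_n(t;\phi_1,\dots,\phi_n):=\EE\big[\prod_{i=1}^n\langle u_t,\phi_i\rangle\big]$ for test functions $\phi_i\in C_c^\infty(\RR)$, and specialize to the pointwise statement at the very end by letting $\phi_i\to\delta_x$ for a single point $x$.

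The first step is to apply It\^o's formula to $\prod_{i=1}^n\langle u_t,\phi_i\rangle$, using the martingale-problem facts that for each $\phi$ the process $\langle u_t,\phi\rangle-\langle u_0,\phi\rangle-\int_0^t\langle u_s,\tfrac12\Delta\phi\rangle\,ds$ is a continuous square-integrable martingale with cross-variation $\int_0^t\langle u_s,\phi\psi\rangle\,ds$ against the analog for $\psi$. Taking expectation and multilinearizing in the $\phi_i$'s to view $m_n$ as a kernel in $\mathbf x=(x_1,\dots,x_n)$ yields the moment hierarchy
\[
\partial_t m_n(t;\mathbf x)=\tfrac12\sum_{k=1}^n\Delta_{x_k} m_n(t;\mathbf x)+\sum_{1\le i<j\le n}\delta(x_i-x_j)\,m_{n-1}(t;\mathbf x\!\setminus\!\{x_j\}),\qquad m_n(0;\cdot)=u_0^{\otimes n}.
\]
Solving this via Duhamel's principle against the tensor heat semigroup $P_t^{\otimes n}$ as free evolution produces a one-step recursion expressing $m_n$ as $\prod_k v_t(x_k)$ plus, for each pair $(i,j)$, a single space-time integration $\int_0^t ds\int_\RR dy\,p_{t-s}(x_i-y)p_{t-s}(x_j-y)$ against a $P_{t-s}$-smoothed copy of $m_{n-1}$.

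Iterating this recursion to exhaustion, at each level one either terminates the current branch by taking the $\prod v$ factor or performs another collision, introducing a new time $s_k$, a new spatial variable $z_k\in\RR$, and two attaching heat kernels. Systematic use of the semigroup identity $\int p_\sigma(a-y)p_\rho(y-b)\,dy=p_{\sigma+\rho}(a-b)$ absorbs the ``pass-through'' spatial integrals that $P_{t-s}$ creates on non-colliding slots, reducing the number of surviving spatial variables to exactly $n'$. The resulting sum is indexed by decorated rooted binary forests with $n$ leaves at time $t$ and $n'\in\{0,\dots,n-1\}$ internal branching points at ordered times $t>s_1>\cdots>s_{n'}>0$; each such forest corresponds to a unique triple $(\alpha,\beta,\tau)\in\cJ_{n,n'}$, where $\alpha_i$ flags whether leaf $i$ survives without colliding (contributing $v_t(x)$) or attaches to an internal node, $\beta_i$ analogously flags whether internal node $i$ roots directly at $u_0$ (contributing $v_{s_i}(z_i)$) or attaches to a deeper node, and $\tau$ records the parent--child assignments. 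The constraints $|\alpha|+|\beta|=2n'$, $\beta_{n'}=0$, and $\tau(i)>\iota_\beta(i-|\alpha|)$ for $i>|\alpha|$ precisely encode binary branching, the rooting of the earliest node at $u_0$, and time-ordering along each tree path. Matching forests to summands gives \eqref{for_mmt0} for smooth test functions, and setting $x_1=\cdots=x_n=x$ and letting $\phi_i^\varepsilon\to\delta_x$ via dominated convergence (justified by Gaussian bounds on the integrand together with $u_0\in\cM_F(\RR)$ and, in $d=1$, the a.s.\ continuous density of $u_t$ for $t>0$) yields the pointwise identity. The hard part will be the combinatorial bookkeeping of the unrolling: verifying that each decorated binary forest arises exactly once and that, after all semigroup collapses, the symmetric pair choices at every level reproduce precisely the index set $\cJ_{n,n'}$ with no over- or under-counting.
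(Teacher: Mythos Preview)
The paper does not actually prove Theorem~\ref{thm_mnt}: it is stated in the appendix and explicitly attributed to \cite[Theorem~4.1]{arxiv-21-hu-kouritzin-xia-zheng}, with the accompanying remark that the proof there goes through for $u_0\in C_b(\RR)$ as well. So there is no ``paper's own proof'' to compare against here; the formula is imported as a black box and the paper's contribution lies in the two-sided estimates (Lemmas~\ref{lmm_ub}, \ref{lmm_lb}) built on top of it.

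That said, your plan is a reasonable and essentially standard route to this kind of moment formula, and is almost certainly close in spirit to what the cited reference does. The moment hierarchy you write down is the well-known recursion for sBm multi-point moments (it goes back at least to Dawson's Saint-Flour notes and Dynkin's work), and its iterated Duhamel expansion is naturally indexed by binary forests with $n$ leaves and $n'$ ordered internal vertices. Your reading of $(\alpha,\beta,\tau)$---$\alpha$ flagging which leaves collide versus connect directly to $u_0$, $\beta$ doing the same for internal nodes, and $\tau$ as the 2-to-1 parent assignment---matches the definitions in Section~\ref{ss_mnt} exactly, and the constraints $|\alpha|+|\beta|=2n'$, $\beta_{n'}=0$, $\tau(i)>\iota_\beta(i-|\alpha|)$ are precisely the degree, rooting, and time-ordering conditions for such forests.

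The one place to be careful is the bijection itself. In the general multi-point setting the leaves are distinguishable, so a forest carries a labeling of its $n$ leaves; once you specialize to $x_1=\cdots=x_n=x$ the integrand no longer sees which leaf is which, but the enumeration by $\cJ_{n,n'}$ still retains the $\alpha$-labeling (which of the $n$ slots has $\alpha_i=1$), so distinct leaf-labelings produce distinct triples even though they give identical summands. Make sure your unrolling respects this and does not collapse symmetric terms prematurely, or the cardinality count in Lemma~\ref{lmm_ncj} will not match. The pointwise limit $\phi_i^\varepsilon\to\delta_x$ is fine in $d=1$ by the Gaussian bounds you mention; just note that the a.s.\ density is not what justifies the limit on the right-hand side (which is deterministic), only the integrability of the heat-kernel products against $u_0^{\otimes(n-n')}$.
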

\begin{remark}
Theorem \ref{thm_mnt} also holds for cases  when $u_0\in C_b(\RR)$. In fact, if one examines the proof of \cite[Theorem 4.1]{arxiv-21-hu-kouritzin-xia-zheng}, the initial condition does not matters, as long as the integrals appearing in the formula are all finite.
\end{remark}

\end{appendix}

\end{document}